\numberwithin{equation}{section} %% Comment out for sequentially-numbered
\numberwithin{figure}{section} %% Comment out for sequentially-numbered
\theoremstyle{plain}
\theoremstyle{plain}
\newtheorem{thm}{Theorem}
  \theoremstyle{plain}
  \newtheorem{prop}[thm]{Proposition}
  \theoremstyle{definition}
  \newtheorem{defn}[thm]{Definition}
  \theoremstyle{plain}
  \newtheorem{cor}[thm]{Corollary}
  \theoremstyle{plain}
  \newtheorem{lem}[thm]{Lemma}
  \theoremstyle{definition}
  \newtheorem*{problem*}{Problem}
\DeclareMathOperator{\homeo}{homeo}
\newcommand{\sH}{\mathcal{H}}
\begin{document}

\title{Rohlin properties for $\mathbb{Z}^{d}$ actions on the Cantor set}

\author{Michael Hochman}
\begin{abstract}
We study the space $\mathcal{H}(d)$ of continuous $\mathbb{Z}^{d}$-actions
on the Cantor set, particularly questions on the existence and nature
of actions whose isomorphism class is dense (Rohlin's property). Kechris
and Rosendal showed that for $d=1$ there is an action on the Cantor
set whose isomorphism class is residual; we prove in contrast that
for $d\geq2$ every isomorphism class in $\mathcal{H}(d)$ is meager.
On the other hand, while generically an action has dense isomorphism
class and the effective actions are dense, no effective action has
dense isomorphism class; thus conjugation on the space of actions
is topologically transitive but one cannot construct a transitive
point. Finally, we show that in the space of transitive and minimal
actions the effective actions are nowhere dense, and in particular
there are minimal actions that are not approximable by minimal SFTs.
\end{abstract}

\curraddr{Fine Hall, Washington Road, Princeton University, Princeton, NJ 08544 }

\email{hochman@math.princeton.edu}

\maketitle

\section{\label{sec:Introduction}Introduction}

Dynamical systems theory studies the asymptotic behavior of automorphisms
of some suitable structure, e.g. a measure space, topological space
or manifold, and more generally group actions by automorphisms. The
space of all such actions often carries a natural topology, and one
is led to questions about the distribution of isomorphism types in
the space of all actions, and the manner in which certain actions
can, or cannot, approximate others. In this way one hopes to achieve
some understanding of the relation between different types of dynamics.
Such a set-up is also suitable for studying rigidity phenomena, i.e.
the transmission of dynamical behaviors from certain actions to actions
in some neighborhood of it. For example see \cite{Halmos44,Ageev05,AlpernPrasad02,Smale91}. 

In ergodic theory this point of view is classical and goes back to
the work of Rohlin and Halmos \cite{Halmos44}, who studied the group
of automorphisms of a Lebesgue space equipped with the so-called coarse
topology. Of interest to us will be Rohlin's theorem that any aperiodic
automorphism has a dense isomorphism class. One should interpret this
as the assertion that, when viewed at any finite resolution, one cannot
distinguish any aperiodic isomorphism type from any other; all dynamical
types are {}``mixed together'' rather well. On the other hand, by
a theorem of del Junco there is a residual set of automorphisms disjoint
from any fixed ergodic automorphism, and hence no isomorphism class
can be residual; it follows from the general theory of Polish group
actions that every aperiodic isomorphism type, while dense, is meager.
These results hold for actions of more general groups, including $\mathbb{Z}^{d}$
actions (Rohlin's theorem at least holds for all discrete amenable
groups).

A natural analogue in the topological category is the space of continuous
actions on the Cantor set, which has received renewed attention recently.
We denote the Cantor space by $K$, and let \[
\mathcal{H}=\homeo(K)\]
be the Polish group of homeomorphisms of $K$ with the topology of
uniform convergence. Each $\varphi\in\mathcal{H}$ gives rise to a
$\mathbb{Z}$-action on $K$, which is the dynamical system associated
to it. Glasner and Weiss \cite{GW-TRproperty} showed that, as in
the ergodic-theory setting, there exist actions $\varphi\in\mathcal{H}$
whose isomorphism class is dense in $\mathcal{H}$ (although it is
not true that this is so for every aperiodic $\varphi$, as in Rohlin's
theorem). More recently this result has been subsumed by a remarkable
theorem of Kechris and Rosendal \cite{KR04}, who proved that there
is actually a single isomorphism class that is residual. Thus, generically,
there is only one $\mathbb{Z}$-actions on $K$. This action was later
described explicitly by Akin, Glasner and Weiss, and it turns out
to be rather degenerate, for example, it is not transitive; but in
the Polish space of transitive actions there is also a generic action
\cite{H08c}. 

The aim of the present paper is to begin the study of the space of
$\mathbb{Z}^{d}$-actions on $K$, which are of interest both in themselves
and as the topological systems underlying a large number of lattice
models in statistical mechanics and probability. We denote this space
by $\mathcal{H}(d)$; formally, it is defined by \[
\mathcal{H}(d)=\hom(\mathbb{Z}^{d},\mathcal{H})\]
with the Polish topology it inherits as a closed subset of the countable
product $\mathcal{H}^{\mathbb{Z}^{d}}$. The group $\mathcal{H}$
acts on $\mathcal{H}(d)$ by conjugation: if $\varphi\in\mathcal{H}(d)$
is an action $\{\varphi^{u}\}_{u\in\mathbb{Z}^{d}}$, and $\pi:K\rightarrow K$
is a homeomorphism, then the conjugation of $\varphi$ by $\pi$ is
the isomorphic action $\{\pi\varphi^{u}\pi^{-1}\}_{u\in\mathbb{Z}^{d}}$.
We denote the conjugacy class of $\varphi$ by $[\varphi]$; this
is by definition the set of actions in $\mathcal{H}(d)$ that are
isomorphic to $\varphi$. 

The group $\mathbb{Z}^{d}$ is said to have the \emph{weak topological
Rohlin property }(WTRP) if the there is an action in $\mathcal{H}(d)$
with dense conjugacy class; it has the \emph{strong topological Rohlin
property }(STRP) if there is a conjugacy class that is a dense $G_{\delta}$
(this is equivalent to there existing a conjugacy class containing
a dense $G_{\delta}$). This terminology has evolved recently in connection
with questions about the largeness of conjugacy classes in topological
groups and, more generally, largeness of conjugacy classes of the
actions of a fixed group; the archetypal example being Rohlin's result
on the group of measure-preserving automorphisms. See \cite{GW08}
for a recent survey and extensive bibliography.

As we have seen, $\mathbb{Z}$ has the strong (and therefore also
the weak) topological Rohlin property. The mechanism behind this is
a rather simple stability phenomenon whereby certain shifts of finite
type propagate their structure to nearby actions. To be precise,
\begin{thm}
Let $X$ be a $\mathbb{Z}^{d}$-shift of finite type and $\varphi\in\mathcal{H}(d)$
an action that factors into $X$. Then there is a neighborhood $U$
of $\varphi$ so that every action $\psi\in U$ factors into $X$.
\end{thm}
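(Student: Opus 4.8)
The plan is to exploit two features of factoring into a shift of finite type: such a factor map is nothing but a locally constant coding of the action by a finite clopen partition, and membership in an SFT is a \emph{finitary} constraint involving only a fixed finite window. Together these make the coding robust under small perturbations of the action. Concretely, write $X\subseteq A^{\mathbb{Z}^{d}}$ for the SFT over the finite alphabet $A$, given by a finite set of forbidden patterns supported on a finite window $W\subseteq\mathbb{Z}^{d}$. A continuous equivariant map $\pi\colon(K,\varphi)\to X$ is determined by the single continuous function $\pi_{0}\colon K\to A$, $\pi_{0}(x)=\pi(x)_{0}$, through $\pi(x)_{u}=\pi_{0}(\varphi^{u}x)$; and by equivariance $\pi$ takes values in $X$ if and only if the pattern $\bigl(\pi_{0}(\varphi^{w}y)\bigr)_{w\in W}$ is allowed for every $y\in K$. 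Since $A$ is finite, $\pi_{0}$ amounts to a finite clopen partition $\{P_{a}\}_{a\in A}$ of $K$ with $P_{a}=\pi_{0}^{-1}(a)$.

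The geometric heart of the matter is that a finite clopen partition separates points uniformly: the $P_{a}$ are pairwise disjoint compact sets, so $\delta:=\min_{a\neq b}\mathrm{dist}(P_{a},P_{b})>0$ (with respect to a fixed metric $\rho$ on $K$), and any two points at distance less than $\delta$ necessarily lie in the same part and hence receive the same $\pi_{0}$-label. I would then take
\[
U=\Bigl\{\psi\in\mathcal{H}(d):\ \sup_{x\in K}\rho(\psi^{w}x,\varphi^{w}x)<\delta\ \text{for all }w\in W\Bigr\},
\]
which, as a finite intersection of basic open sets in the product topology on $\mathcal{H}(d)\subseteq\mathcal{H}^{\mathbb{Z}^{d}}$, is an open neighborhood of $\varphi$.

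Finally, for $\psi\in U$ I would code $\psi$ with the \emph{same} partition, setting $\pi^{\psi}(x)_{u}=\pi_{0}(\psi^{u}x)$; this is continuous and equivariant by construction, so it only remains to see that its image lies in $X$. By equivariance (a pattern of $\pi^{\psi}(x)$ at position $u$ is the origin pattern of $\pi^{\psi}(\psi^{u}x)$) it suffices to check that $\bigl(\pi_{0}(\psi^{w}y)\bigr)_{w\in W}$ is allowed for every $y\in K$. For each $w\in W$ we have $\rho(\psi^{w}y,\varphi^{w}y)<\delta$, so $\psi^{w}y$ and $\varphi^{w}y$ lie in the same part and $\pi_{0}(\psi^{w}y)=\pi_{0}(\varphi^{w}y)$; thus this pattern equals $\bigl(\pi_{0}(\varphi^{w}y)\bigr)_{w\in W}$, which is allowed because $\pi$ maps into $X$. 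Hence $\pi^{\psi}$ factors $\psi$ into $X$, as required.

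The step that genuinely uses the finite-type hypothesis is the construction of $U$: because all constraints are carried by a single finite window $W$, a neighborhood controlling the finitely many homeomorphisms $\{\varphi^{w}\}_{w\in W}$ uniformly is enough. For a general subshift one would face infinitely many constraints and would need uniform control of $\psi^{u}$ for all $u\in\mathbb{Z}^{d}$ simultaneously, which is not an open condition; this is precisely where the analogous stability statement would fail, and isolating this finiteness is the one point that requires care.
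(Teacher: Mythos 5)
Your proof is correct and follows essentially the same route as the paper's own argument (Proposition \ref{pro:stability-of-SFTs}): identify the factor map with a coding by a finite clopen partition, use the positive separation between the clopen parts so that actions uniformly close to $\varphi$ over the finite window of forbidden patterns produce identical labels, and conclude that the perturbed coding still avoids all forbidden patterns. The only cosmetic difference is that you phrase the partition as the fibers of $\pi_{0}$ and make the window-neighborhood $U$ explicit, whereas the paper states it via the coding map $\widehat{c}_{\alpha}$ and patterns of diameter $<r$; the substance is identical.
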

In particular if $X$ is minimal then all actions sufficiently close
to $X$ factor \emph{onto} $X$, and in dimension $1$ the same is
true if $X$ is a $0$-entropy SFT. This\emph{ }partly explains the
fact that the generic $\mathbb{Z}$-system of Kechris and Rosendal
as the countable product of all zero-entropy shifts of finite type
(the Akin-Glasner-Weiss description is somewhat different). The proof
of this also relies on some very special properties of zero-entropy
shifts of finite type in dimension $1$, particularly the fact that
their joinings decompose into countable many disjoint subsystems of
the same type. In contrast, in higher dimensions the same stability
phenomenon exists but the behavior of shifts of finite type is far
more complicated. 

Utilizing recent advances in our understanding of multidimensional
shifts of finite type, we are able to show that the case $d>1$ differs
from that of $d=1$:
\begin{thm}
\label{thm:STRP}For $d\geq2$, any action $\varphi\in\mathcal{H}(d)$
has a meager conjugacy class, i.e. $\mathbb{Z}^{d}$ does not have
the strong topological Rohlin property.
\end{thm}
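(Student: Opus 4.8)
The plan is to show that every conjugacy class in $\mathcal{H}(d)$ for $d \geq 2$ is meager, by exhibiting a countable family of closed nowhere-dense sets whose union contains the whole conjugacy class, or equivalently by finding a residual set of actions disjoint (in the appropriate dynamical sense) from any fixed action. Let me think about what mechanism forces meagerness.

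The structure of the problem strongly suggests mimicking the measure-theoretic del Junco argument: there one finds a residual set of automorphisms disjoint from any fixed ergodic automorphism, so no single isomorphism class can be residual. In the topological $\mathbb{Z}^d$ setting I would instead exploit the richness of multidimensional SFTs promised in the introduction.

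First I would fix an action $\varphi$ and seek an invariant distinguishing conjugacy classes that is (a) preserved under isomorphism and (b) generic for a value \emph{different} from $\varphi$'s. Second, I would use Theorem~1 (the SFT stability theorem) as the engine: since any action factoring into a fixed SFT $X$ does so on a whole neighborhood, the set $\{\psi : \psi \text{ factors into } X\}$ is open. If in dimension $d \geq 2$ one can produce a countable collection of SFTs $X_n$ such that (i) generically an action factors into infinitely many of the $X_n$, yet (ii) a \emph{fixed} action $\varphi$ can factor into only finitely many of them, then $\varphi$'s conjugacy class would be confined to a meager set. The key is the contrast emphasized in the introduction: in dimension $d\geq 2$ there exist SFTs with genuinely complicated (e.g. positive-entropy, or complexity-diverging) behavior, so that membership in the generic intersection imposes an infinitary constraint that no single action can meet. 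Concretely, I would look for a sequence of SFTs whose topological entropies, or some finitary complexity function, tend to values that a fixed $\varphi$ (with its fixed finite entropy / bounded complexity) cannot match for all large $n$, while the set of actions realizing arbitrarily large such values is residual.

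Third, I would package this as follows. Let $P_n$ be the open set of actions factoring into $X_n$ (open by Theorem~1). I would arrange $\{X_n\}$ so that $G = \limsup_n P_n = \bigcap_N \bigcup_{n \geq N} P_n$ is residual — each $\bigcup_{n\geq N} P_n$ should be shown dense, using the density of effective actions mentioned in the abstract together with a construction embedding approximants into the $X_n$. Simultaneously, the invariant carried by $\varphi$ (and hence by every $\psi \in [\varphi]$, since it is isomorphism-invariant) forces $[\varphi] \cap P_n = \emptyset$ for all large $n$, so $[\varphi]$ is disjoint from the residual $G$ and is therefore meager. The main obstacle, and the place where the dimension hypothesis $d \geq 2$ is essential, is the simultaneous achievement of (i) and (ii): one must produce SFTs rich enough that factoring into them is both a residual condition \emph{and} an isomorphism invariant that escapes any fixed action. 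This is exactly where the failure of the one-dimensional picture (zero-entropy SFTs with their tame joining structure) is replaced by the ``far more complicated'' multidimensional SFTs, and verifying that the chosen complexity invariant is both generically unbounded and unattainable by a fixed action is the technical heart of the argument.
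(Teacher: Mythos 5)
Your Baire-category packaging (open sets $P_n$ of actions factoring into $X_n$, a residual $\limsup$, an invariant keeping $[\varphi]$ out of it) is sound in form, but the engine you propose cannot run, and it fails exactly on the actions the theorem is about. First, the entropy heuristic is backwards: a factor map into $X_n$ has as image some subsystem of $X_n$, and entropy only \emph{decreases} under factors, so factoring \emph{into} a rich, high-complexity SFT is easy, not hard --- every action admitting a suitable clopen partition factors into a full shift of arbitrarily large entropy, and more generally no finitary complexity bound on a fixed $\varphi$ obstructs factoring into complicated targets. Second, and decisively, your conditions (i) and (ii) are incompatible: each $P_n$ is open (proposition \ref{pro:stability-of-SFTs}) \emph{and} conjugation-invariant, and by proposition \ref{thm:WTRP} there exist actions with dense conjugacy class (indeed, by topological transitivity of the conjugation action these form a residual set). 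A dense-class action meets every nonempty invariant open set through a conjugate, hence lies in every nonempty $P_n$ --- so it factors into \emph{all} your $X_n$, violating (ii). Since a hypothetical comeager conjugacy class would consist precisely of such dense-class actions, your scheme cannot exclude the one case that needs excluding; this is the same phenomenon exploited in the proof of corollary \ref{pro:no-effective-transitive-action}, but there it works \emph{against} the strategy you propose.

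The paper's mechanism inverts your plan: instead of varying the target SFT and asking which actions factor into it, it fixes a \emph{single} SFT $Y$ of nontrivial Medvedev degree (built over an effective set $\Omega$ of nontrivial degree via the sofic shift $Z$ of theorem \ref{thm:subactions-of-SFTs}), takes $\varphi$ conjugate to a subsystem of $Y$ coded by a partition $\alpha$, and uses proposition \ref{pro:stability-of-SFTs} to obtain a neighborhood $U$ in which every $\psi$ projects via $\widehat{c}_{\alpha}$ into $Y$. The recursion-theoretic input --- effective minimal subshifts have minimal Medvedev degree (proposition \ref{pro:no-complex-minimal-actions}) --- shows that no minimal subsystem of an effective subset of $Z$ is isolated, which yields the perturbation statement (corollary \ref{cor:perturbations-of-subsystems}): arbitrarily close to any SFT action in $U$ there is one whose projection $\widehat{c}_{\alpha}(\psi)$ strictly shrinks. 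Hence for each fixed subshift $Y_i\subseteq Y$ the set $\{\psi\in U:\widehat{c}_{\alpha}(\psi)\neq Y_i\}$ is open and dense. The countability you need is then supplied not by a sequence of SFT targets but by the fixed action $\theta$ itself: $\theta$ has only countably many symbolic factors (one per clopen partition of $K$), so enumerating those that are subsystems of $Y$ as $Y_1,Y_2,\ldots$ and intersecting the corresponding dense open sets gives a residual set of $\psi\in U$ with $\widehat{c}_{\alpha}(\psi)$ not a factor of $\theta$, whence $\psi\not\cong\theta$; the meager-or-comeager dichotomy for orbits of Polish group actions then forces $[\theta]$ to be meager. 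The obstruction is thus not that a fixed action fails to factor into many SFTs, but that its countably many symbolic factors inside one fixed, Medvedev-nontrivial $Y$ can all be generically avoided.
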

It is much easier to establish that $\mathcal{H}(d)$ has the weak
Rohlin property. In fact this is a simple consequence of separability
of $\mathcal{H}(d)$, and holds for the space of actions of any discrete
groups: 
\begin{prop}
\label{thm:WTRP}For $d\geq2$, $\mathcal{H}(d)$ has the weak topological
Rohlin property, i.e. there are actions with dense conjugacy class.
\end{prop}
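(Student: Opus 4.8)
The plan is to exploit the fact that a Polish group action is topologically transitive precisely when it has a dense orbit, and to produce such a dense orbit via a Baire-category / back-and-forth argument that uses only separability of $\mathcal{H}(d)$. Concretely, I would show that the conjugation action of $\mathcal{H}$ on $\mathcal{H}(d)$ is topologically transitive, which by the Polish-space dichotomy yields a dense conjugacy class. Since $\mathcal{H}(d)$ is a separable metric space, it suffices to verify the following mixing condition: for any two nonempty open sets $U,V\subseteq\mathcal{H}(d)$ there is a conjugation taking a point of $U$ into $V$, i.e. there exist $\varphi\in U$, $\psi\in V$ and $\pi\in\mathcal{H}$ with $\psi=\pi\varphi\pi^{-1}$. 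Equivalently, I want to find a single $\varphi$ whose conjugacy class $[\varphi]$ meets every basic open set.

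First I would fix a countable basis $\{U_n\}_{n\ge 1}$ for the topology of $\mathcal{H}(d)$; a basic open set is determined by specifying the images $\varphi^{u}(x)$ to within a fixed clopen partition of $K$, for finitely many generators $u$ and finitely many clopen sets, so membership in $U_n$ is a finite, combinatorial condition on the action. The strategy is to build the desired $\varphi$ as an increasing limit of finite approximations: an action on a Cantor set decomposed into disjoint clopen pieces, on each of which I place (a conjugate copy of) an action drawn from one of the $U_n$. The key point is that $K$ is homeomorphic to a countable disjoint union of copies of $K$, so I can realize a ``direct sum'' of countably many prescribed actions inside a single action on $K$; by arranging that a conjugate of some action from $U_n$ appears on one of the clopen blocks for every $n$, and then perturbing so that the whole thing is genuinely a $\mathbb{Z}^{d}$-action on all of $K$, I obtain a $\varphi$ whose conjugacy class is dense. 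In practice it is cleaner to run this as a back-and-forth: enumerate pairs (basic open set, finite partial isomorphism), and at each stage extend a partial conjugacy so that the orbit of a fixed base action hits the next open set, taking care that the partial homeomorphisms being built converge uniformly to an actual element of $\mathcal{H}$.

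The main obstacle I anticipate is bookkeeping rather than a genuine analytic difficulty: I must ensure that the countably many finite commitments are mutually compatible as a single $\mathbb{Z}^{d}$-action (all the commuting relations among the generators must hold simultaneously), and that the approximating homeomorphisms of $K$ converge so that the limit is a homeomorphism and the limiting maps still commute. Because the defining conditions for membership in $U_n$ depend only on finitely many coordinates and a finite clopen resolution, each commitment constrains only finitely much of the structure, so there is always room to extend; the commuting relations are automatically preserved because I am only ever transporting a genuine action by a clopen conjugacy onto a clopen block. The convergence is controlled by making the $n$-th block, and hence the support of the $n$-th adjustment, small in diameter, so the homeomorphisms form a Cauchy sequence in the uniform metric. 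I expect the cleanest writeup to invoke directly that the conjugation action is topologically transitive (no transitive point need be exhibited, only the mixing of open sets) and then cite the standard fact that a topologically transitive Polish group action on a Polish space has a dense orbit, giving the required dense conjugacy class.
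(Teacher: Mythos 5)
Your high-level frame (conjugation is a topologically transitive Polish group action, hence by Baire category there is a dense orbit, i.e.\ a dense conjugacy class) is sound, and the reduction to separability is the same starting point as the paper. But the construction you propose to witness transitivity --- a ``direct sum'' of conjugates of actions from each basic open set $U_n$, placed on countably many disjoint clopen blocks of $K$ --- has a genuine gap, in fact two. The minor one: $K$ is \emph{not} homeomorphic to a countable disjoint union of copies of itself (compactness forbids a partition into infinitely many nonempty clopen pieces); you need the blocks to accumulate at a point, fix that point, and check continuity there. That is repairable. The serious one is the step from ``a conjugate of some action from $U_n$ appears on one of the clopen blocks'' to ``the conjugacy class of the whole action is dense.'' Closeness in $\mathcal{H}(d)$ is \emph{uniform over all of $K$}: to place a conjugate $\pi\varphi\pi^{-1}$ inside $U_n$ you must account for where $\pi$ sends the \emph{other} blocks, and their union $C=\pi(\text{rest})$ is a proper clopen set invariant under $\pi\varphi\pi^{-1}$. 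Uniform $\varepsilon$-closeness to a target $\psi\in U_n$ then forces $\psi^{e_j}$ to move every point of $C$ to within $\varepsilon$ of $C$ (and similarly for the complement), i.e.\ the target must admit proper clopen ``$\varepsilon$-almost-invariant'' sets at every scale, simultaneously for all $d$ generators. Nothing in your writeup supplies this, and it is not obvious for an arbitrary target (e.g.\ a minimal action); containing the right subsystem on a block simply does not control the action off that block. Your back-and-forth variant has the same problem: one cannot ``extend a partial conjugacy so that the orbit of a fixed base action hits the next open set'' unless the base action already carries the needed structure, which is exactly what must be built.

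The paper sidesteps this entirely by using a \emph{product} rather than a sum: take a dense sequence $\varphi_1,\varphi_2,\ldots$ in $\mathcal{H}(d)$ and let $\psi=\times_i\varphi_i$ act on $K^{\aleph_0}\cong K$, with $K=\{0,1\}^{\mathbb{N}}$. To show $\varphi_m$ lies in the closure of $[\psi]$ at resolution $n$, choose $k(n)$ so that the first $n$ output coordinates of each $\varphi_m^{e_j}$ depend only on the first $k(n)$ input coordinates, partition $\mathbb{N}$ into infinite sets $I_1,I_2,\ldots$ with $\{1,\ldots,k(n)\}\subseteq I_1$, and rearrange so the factor $\varphi_m$ acts on the coordinates $I_1$. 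The rearranged action is conjugate to $\psi$ and agrees with $\varphi_m$ on the first $n$ coordinates. The point is that in the product the remaining factors are hidden in coordinates invisible at resolution $n$, whereas in your disjoint-union picture the remaining blocks occupy actual clopen territory in $K$ that is visible at every resolution --- that is precisely where your argument breaks. If you replace your block construction by this product construction, your Baire-category framing goes through (indeed the paper's argument directly exhibits the transitive point, making the dichotomy appeal unnecessary).
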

However, there is an interesting twist. In $\mathcal{H}(1)$, there
are explicit constructions of systems with dense conjugacy class.
There are several ways to make precise the notion of an explicit constructions,
one of which is the following. Say that $\varphi\in\mathcal{H}(d)$
is \emph{effective }if there is an algorithmic procedure for deciding,
given a finite set $F\subseteq\mathbb{Z}^{d}$ and a family $\{C_{u}\}_{u\in F}$
of closed and open subsets of $K$, whether $\cap_{u\in F}T^{u}C_{u}=\emptyset$.
In fact, one can (and we shall) weaken this and demand only that emptiness
of this intersection can be semi-decided, in the sense that if it
is empty the algorithm must detect this and halt, but may otherwise
it need not return a decision (see section \ref{sub:Effectively-closed-sets}
for a discussion and some other notions of effectiveness). The Kechris-Rosendal
can be realized as an effective action in both the stronger and weaker
sense, and one can also construct explicitly other actions with dense
orbit as Glasner and Weiss did. 

In contrast,
\begin{thm}
\label{thm:WTRP-is-non-effective}For $d\geq2$ there are no effective
actions with dense conjugacy class.
\end{thm}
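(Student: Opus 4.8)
The plan is to combine the stability theorem stated above with the undecidability of the domino problem for $d\geq2$, passing from the topological density hypothesis to a computability contradiction. The reduction proceeds in three stages.

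First I would convert the density hypothesis into a purely structural statement: \emph{if $[\varphi]$ is dense then $\varphi$ factors onto every minimal SFT}. Indeed, let $X$ be an infinite minimal $\mathbb{Z}^{d}$-SFT; being minimal and infinite it is a perfect, totally disconnected subshift, hence a Cantor system defining a point of $\mathcal{H}(d)$. Applying the stability theorem to the identity factor map $X\to X$ produces a neighborhood $U$ of $X$ all of whose members factor into $X$, and by minimality of $X$ this factoring is automatically onto. Since $[\varphi]$ is dense it meets $U$ in a conjugate $\psi=\pi\varphi\pi^{-1}$, which factors onto $X$ via some $\rho$; then $\rho\circ\pi$ is a factor map of $\varphi$ onto $X$, so $\varphi$ itself factors onto every minimal SFT.

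Next I would isolate the only consequence of effectiveness that is needed: \emph{for effective $\varphi$ the relation ``$\varphi$ factors into $X$'' is semi-decidable uniformly in the finite description of the SFT $X$}. A continuous equivariant map into $X\subseteq A^{\mathbb{Z}^{d}}$ is given by a finite clopen partition $\{C_{a}\}_{a\in A}$, and its image avoids the (finitely many) forbidden patterns of $X$ exactly when each of the corresponding clopen intersections $\bigcap_{w\in W}\varphi^{w}C_{p(w)}$ is empty. One enumerates the countably many clopen partitions indexed by $A$ and, for each, runs the emptiness semi-algorithm supplied by effectiveness on the finitely many forbidden patterns, halting when some partition passes all tests. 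This search halts precisely when $\varphi$ factors into $X$; in particular it never halts when $X=\emptyset$, since the nonempty image of $K$ cannot avoid the forbidden patterns of an empty SFT.

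The main obstacle, and the step drawing on recent technology for multidimensional SFTs, is the construction of a \emph{computable family $(X_{n})_{n}$ of $\mathbb{Z}^{d}$-SFTs, each of which is either empty or minimal, for which $\{n:X_{n}\neq\emptyset\}$ is undecidable}. Such a family should come from a reduction of the halting (or domino) problem through self-similar, Turing-machine-simulating tilesets of fixed-point type, arranged so that a halting computation forces a global inconsistency (whence $X_{n}=\emptyset$) while a non-halting computation leaves only a uniformly recurrent, minimal tiling. Granting this, suppose $\varphi$ were effective with $[\varphi]$ dense. By the first stage $\varphi$ factors onto, hence into, each minimal $X_{n}$, so the search of the second stage halts exactly when $X_{n}\neq\emptyset$; this exhibits $\{n:X_{n}\neq\emptyset\}$ as computably enumerable. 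But emptiness of an SFT is always semi-decidable (search for a finite box admitting no locally admissible pattern), so this set is also co-c.e., hence decidable, contradicting the choice of the family. I expect essentially all of the genuine difficulty to lie in this empty-or-minimal family: the reduction must simultaneously govern emptiness, to import undecidability from Berger's theorem, and minimality, so that the topological first stage applies, whereas the classical aperiodic tilesets supply neither minimality nor emptiness for free.
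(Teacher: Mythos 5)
Your first two stages are sound and coincide with the machinery the paper actually uses: stage 1 is the paper's Corollary \ref{pro:no-effective-transitive-action} (stability of the SFT-coding under perturbation, plus density of the conjugacy class, transports a factor map into a fixed SFT back to $\varphi$), and stage 2 is exactly the semi-decidability reduction from \cite{H08b} (enumerate clopen partitions, semi-decide the finitely many emptiness conditions coming from the forbidden patterns, so ``$\varphi$ factors into $X$'' is c.e.\ uniformly in $X$, and never holds when $X=\emptyset$). The problem is stage 3. You route the argument through an unproved construction --- a computable family of $\mathbb{Z}^{d}$-SFTs each of which is empty or minimal, with undecidable emptiness --- and you concede that all the difficulty lives there. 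As written this is a genuine gap: nothing in the proposal establishes that such a family exists, and it is a serious tiling-theoretic construction (note you would in fact need \emph{infinite} minimal, so that a nonempty $X_{n}$ is a Cantor system realizable as a point of $\mathcal{H}(d)$; minimality alone permits finite orbits, which your stage 1 cannot handle). So the proposal, taken on its own terms, does not prove the theorem; it reduces it to an open-ended construction problem.

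The paper's point is that this entire detour through minimality is unnecessary. Your own stage-2 algorithm only uses factoring \emph{into} $X$ together with nonemptiness of $X$: nonempty $\Rightarrow$ the search halts (stage 1 supplies an into-factor), empty $\Rightarrow$ it cannot halt (a map from $K$ has nonempty image). Ontoness, which is the only thing minimality buys you in stage 1, plays no role afterwards. Once you replace ``minimal'' by ``perfect'', the needed family is trivial: for an arbitrary SFT $X$, the SFT $X\times\{0,1\}^{\mathbb{Z}^{d}}$ is either empty or perfect, and is empty iff $X$ is. Stage 1 goes through verbatim for nonempty perfect SFTs (they are Cantor systems, so points of $\mathcal{H}(d)$, and the stability theorem applied to the identity factor map gives the neighborhood; density of $[\varphi]$ then gives an into-factor --- no minimality needed since you never promote ``into'' to ``onto''). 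This imports Berger's theorem directly, with no new tileset construction, and is exactly how the paper proves Theorem \ref{thm:no-universal-SFTs} and deduces the present statement. I'd suggest rewriting stage 1 in this weaker form and deleting stage 3; your stages 1--2 then already constitute a complete proof.
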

Stated another way, the conjugation action of $\mathcal{H}$ on $\mathcal{H}(d)$
is topologically transitive, and therefore there is a dense $G_{\delta}$
set of actions whose conjugacy class is dense; but it is formally
impossible to construct a transitive point. 

In spite of the above, note that the effective systems are dense in
$\mathcal{H}(d)$ (e.g. the SFTs are dense; see proposition \ref{pro:density-of-SFTs}
below). But one cannot use the implication separable$\implies$WTRP,
as in proposition \ref{thm:WTRP}, to get en effective transitive
point, because the separability condition is not effective: there
is no recursive dense sequence of effective actions.

Nevertheless, density of effective systems means that in a certain
sense the entire space $\mathcal{H}(d)$ is accessible to us. It turns
out that this is not the case for some other interesting spaces. Consider
for example the Polish space $\mathcal{M}(d)\subseteq\sH(d)$ of minimal
actions, i.e. actions in which every orbit is dense. Classically such
actions have been studied extensively as the analogue of ergodic actions,
and there is a rich theory of their structure for arbitrary acting
groups \cite{Auslander88}. In dimension 1, one can explicitly construct
families of minimal actions that are dense in $\mathcal{M}(1)$; indeed,
in \cite{H08c} we showed that the universal odometer, i.e. the unique
(up to isomorphism) minimal subsystem of the product of all finite
cycles, is generic there, and in particular has a dense conjugacy
class; and in other reasonable parametrization one can show that the
finite cycles are dense (these fail to be dense in $\mathcal{H}(1)$
only for the technical reason that their phase space is not the Cantor
set). On the other hand, the following theorem shows that in higher
dimensions the space of minimal actions is in a very strong sense
inaccessible to us, even in the approximation sense:
\begin{thm}
\label{thm:minimal-systems}For $d\geq2$, the systems conjugate to
minimal effective systems systems are nowhere dense in $\mathcal{M}(d)$. 
\end{thm}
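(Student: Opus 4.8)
The plan is to show that minimal effective systems are nowhere dense in $\mathcal{M}(d)$ by exhibiting, near every minimal effective system, a nonempty open set of minimal actions none of which is conjugate to a minimal effective system. The key tension to exploit is the following dichotomy coming from the theory of multidimensional SFTs: minimal effective $\mathbb{Z}^d$-subshifts are, via the Hochman embedding theorem, precisely (up to conjugacy and suspension) those that embed into an SFT, and their combinatorial complexity is algorithmically constrained, whereas a generic minimal action should have complexity that defeats any single algorithm. So I would first record the characterization of effectiveness in terms of computability of the language: an effective minimal system has an effectively closed (i.e.\ $\Pi^0_1$) set of forbidden patterns, so the collection of legal finite patterns is co-recursively-enumerable. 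I expect to reduce the whole problem to symbolic dynamics, replacing a general minimal action on $K$ by a conjugate minimal subshift over a finite alphabet, using that every action on the Cantor set is conjugate to a subshift via a clopen generating partition (and that effectiveness passes to such a symbolic representation).

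Next I would set up a counting/diagonalization argument. Fix a minimal effective system $\varphi$ and a basic neighborhood $U$ determined by finitely many clopen sets and finitely many group elements; this neighborhood only constrains the dynamics on a bounded window $F \subseteq \mathbb{Z}^d$. The goal is to find a minimal action $\psi \in U$ that is provably non-effective. I would build $\psi$ (or rather its symbolic model) as a minimal subshift whose language agrees with that of $\varphi$ on patterns supported in $F$, but which encodes, along larger and larger scales, the answer to an undecidable problem — for instance, diagonalizing against an enumeration $\{M_n\}$ of all Turing machines that purport to semi-decide the emptiness problem for the action. The minimality and the window-agreement guarantee membership in $U$; the diagonal encoding guarantees that no algorithm correctly semi-decides the clopen-intersection emptiness relation, hence $\psi$ is not effective. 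Because conjugacy preserves the (computable isomorphism class of the) emptiness problem, $\psi$ is not even conjugate to an effective system.

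The delicate point, and the main obstacle, is simultaneously achieving \emph{minimality} and \emph{non-effectiveness} inside the prescribed neighborhood. Non-effectiveness wants the system to be complicated enough to encode undecidable information, while minimality is a strong recurrence/homogeneity constraint that tends to force a great deal of internal structure and can, if one is not careful, make the system effective after all (indeed many natural minimal systems, such as substitutive or effectively-constructed almost-automorphic systems, \emph{are} effective). The way I would resolve this is to use the flexibility of minimal systems built from hierarchical or Toeplitz-type constructions, where one has a free choice, at each scale, of finitely many combinatorial parameters; minimality is guaranteed by the uniform recurrence built into the hierarchy regardless of the choices, while the sequence of choices can be made to code an arbitrary element of $2^{\mathbb{N}}$ — and in particular a non-recursive element whose computation is tied to the halting problem. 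The measure-theoretic bulk of the argument is then the verification that a non-recursive choice sequence yields a system whose emptiness problem is not semi-decidable, together with the category statement: one shows that the set of actions obtained from non-recursive parameters is open in $\mathcal{M}(d)$ relative to the construction and dense near $\varphi$, so its complement (containing all minimal effective conjugates) has empty interior, i.e.\ is nowhere dense.

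Finally I would assemble these pieces: given the diagonalization producing $\psi \in U$ non-effective and minimal, and given that the construction admits an open family of nearby non-effective minimal perturbations, I conclude that $U$ is not contained in the closure of the minimal-effective conjugacy classes. Since $U$ was an arbitrary basic open set meeting $\mathcal{M}(d)$ and $\varphi$ an arbitrary minimal effective system, the set of systems conjugate to minimal effective systems has empty interior in its own closure, which is exactly the assertion that it is nowhere dense in $\mathcal{M}(d)$.
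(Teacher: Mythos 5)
Your route is genuinely different from the paper's, but it has a fatal gap at exactly the crucial step. Even if the Toeplitz/hierarchical diagonalization were carried out in full, it would produce a single non-effective minimal action $\psi\in U$ (or at best a dense family of such actions near $\varphi$). That is not nowhere density: you need a nonempty \emph{open} subset of $\mathcal{M}(d)$ disjoint from the conjugates of minimal effective systems, and your only claim in that direction --- that ``the set of actions obtained from non-recursive parameters is open in $\mathcal{M}(d)$ relative to the construction'' --- is asserted, not proved, and is implausible as stated. Non-effectiveness is the non-existence of an algorithm, a pointwise property with no a priori stability: a basic neighborhood in $\mathcal{H}(d)$ constrains an action only at finite resolution, and nothing in your coding prevents an effective minimal system from agreeing with $\psi$ to any prescribed finite precision. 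Indeed, effective systems (the SFTs) are dense in $\mathcal{H}(d)$ by proposition \ref{pro:density-of-SFTs}, so no argument that merely plants non-effective points densely can establish the theorem; an open obstruction is required. A secondary error: your claim that ``conjugacy preserves the (computable isomorphism class of the) emptiness problem'' is false --- the paper stresses in section \ref{sub:Effectively-closed-sets} that effectiveness is \emph{not} a conjugacy invariant of actions on $K$. This particular point is repairable (a system conjugate to an effective one has all its symbolic factors effective, by proposition \ref{pro:symbolic-factors-of-effective-systems}, so one non-effective symbolic factor certifies non-conjugacy), but the openness gap is not. The appeal to the embedding of effective subshifts into SFTs is also tangential and used in the wrong direction; it plays no role in the statement.

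The paper's proof supplies precisely the missing stability, via an open, conjugacy-invariant obstruction rather than a pointwise one. It constructs (section \ref{sub:Construction-of-Y}) an SFT $Y$ of non-minimal Medvedev degree, starting from an effectively closed set $\Omega\subseteq\{0,1\}^{\mathbb{N}}$ of nontrivial degree and the realization theorem \ref{thm:subactions-of-SFTs}. Take a minimal subsystem $Y_{0}\subseteq Y$ and an action $\varphi$ conjugate to it; by proposition \ref{pro:stability-of-SFTs} the condition ``factors into $Y$'' holds on an entire open neighborhood $U$ of $\varphi$. Now if some $\psi\in U$ were conjugate to a minimal effective system, its symbolic factor inside $Y$ would be a minimal effective subshift, hence of minimal Medvedev degree by proposition \ref{pro:no-complex-minimal-actions}; since $X\subseteq Y$ implies $X\succ Y$, this would force $m(Y)$ to be minimal, contradicting the construction. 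Thus the whole open set $U$ --- not just one point --- avoids conjugates of minimal effective systems, and such neighborhoods can be placed near any member of the set in question, giving nowhere density. Note that the Medvedev degree, unlike your emptiness-problem invariant, survives conjugacy because it is read off symbolic factors. To salvage your scheme you would need to replace ``$\psi$ is non-effective'' by an open condition forcing non-effectiveness of every nearby minimal action, which in effect means reinventing the factor-into-$Y$ mechanism; the diagonalization machinery alone cannot close this gap.
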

A similar statement holds for transitive systems. Note that a system
may be conjugate to an effective system without being effective itself. 

Note that SFTs are effective, and it follows that there are minimal
actions which cannot be approximated by SFTs. This is somewhat unexpected
as well: in dimension $d\geq2$ SFTs display a wealth of dynamics,
including minimal dynamics, and this has lead to the impression that
they can represent quite general dynamics. Theorem \ref{thm:minimal-systems}
shows that this is far from the case.

All the results above have an analogue in the space of closed subsystems
of the shift space $Q^{\mathbb{Z}}$, where $Q$ is the Hilbert cube
(the topology is that of the Hausdorff metric). This model was studied
in \cite{H08c} and the methods there can be used to translate the
present results to that setting.

The rest of this paper is organized as follows. In the next section
we prove the theorems about the WTRP and prove theorem \ref{thm:minimal-systems}.
Section \ref{sec:STRP} is devoted to the STRP. In section \ref{sec:Two-problems}
we conclude with some open questions.

\section{\label{sec:WTRP}The weak topological Rohlin property}

In this section we prove theorem \ref{sec:WTRP} and \ref{thm:minimal-systems}.
We first develop some basic facts about $\mathcal{H}(d)$.

\subsection{\label{sub:Actions-and-subshifts}Actions versus  subshifts}

When discussing an action $\varphi$ on $K$ we shall abbreviate and
write $\varphi$ for the associated dynamical system $(K,\varphi)$.
When dealing with a subshift $X$ of a symbolic space $\{1,2,\ldots,k\},^{\mathbb{Z}^{d}}$
or $K^{\mathbb{Z}^{d}}$, we denote by $\sigma=\{\sigma^{u}\}_{u\in\mathbb{Z}^{d}}$
the shift action and denote the system $(X,\sigma)$ simply by $X$.
We refer the reader to \cite{WALTERS82} for basic definitions from
topological dynamics.

There is a close connection between the space of actions of $\mathbb{Z}^{d}$
on $K$ and the space of subsystems of a shift space, with the Hausdorff
metric, and we shall have occasion to work with both settings. This
connection was explored in \cite{H08c}. Our presentation focuses
on the space $\sH$ but appeals to the subshift model at some points
to make use of symbolic constructions and invariants.

\subsection{\label{sub:Topology-of-H-and projections}Topology of $\mathcal{H}(d)$
and projection into SFTs}

Let $e_{1},\ldots,e_{d}$ denote the standard generators of $\mathbb{Z}^{d}$.
For concreteness let us fix a complete metric $d$ on $K$, and for
$\varphi,\psi:K\rightarrow K$ let \[
d(\varphi,\psi)=\max_{x\in K}d(\varphi(x),\psi(x))+\max_{y\in K}d(\varphi(y),\psi(y))\]
This is a complete metric on $\mathcal{H}$, and one verifies that
the metric\[
d(\varphi,\psi)=\max_{i=1,\ldots,d}d(\varphi^{e_{i}},\psi^{e_{i}})\]
is a complete metric on $\mathcal{H}(d)$ compatible with the topology
defined in the Introduction.

Given a partition $\alpha=\{A_{1},\ldots,A_{n}\}$ of $K$ into clopen
sets and an action $\varphi\in\sH(d)$, we write $c_{\alpha}:K\rightarrow\{1,\ldots,n\}^{\mathbb{Z}^{d}}$
for the coding map that takes $x\in K$ to its $\alpha$-itinerary,
i.e. to the sequence $(x_{u})_{u\in\mathbb{Z}^{d}}\in\{1,\ldots,n\}^{\mathbb{Z}^{d}}$
with $x_{u}=i$ if and only if $\varphi^{u}x\in A_{i}$. We write
$c_{\alpha,\varphi}$ when we want to make explicit the dependence
on $\varphi$. We also write $\widehat{c}_{\alpha}(\varphi)$ for
the image of the map $c_{\alpha,\varphi}$, which is a subshift of
$\{1,\ldots,n\}^{\mathbb{Z}^{d}}$. Thus $\widehat{c}_{\alpha}$ maps
actions to subshifts, and $c_{\alpha}=c_{\alpha,\varphi}$ is the
factor map from $(K,\varphi)$ to $(\widehat{c}_{\alpha}(\varphi),\sigma)$.

Recall that an SFT is a subshift $X\subseteq\Sigma^{\mathbb{Z}^{d}}$
($\Sigma$ finite) defined by a finite set of finite patterns $a_{1},\ldots,a_{n}$
and is the set of configurations $x\in\Sigma^{\mathbb{Z}^{d}}$ that
do not contain occurrences of any of the $a_{i}$. 

Although the SFT condition appears syntactic, it is an isomorphism
invariant. That is, if two subshifts are isomorphic and one is an
SFT, so is the other (defined by some other set of patterns). We shall
thus also refer to actions $\varphi\in\sH(d)$ as SFTs if they are
isomorphic to SFTs.
\begin{prop}
\label{pro:stability-of-SFTs}Suppose $\alpha=\{A_{1},\ldots,A_{n}\}$
is a clopen partition of $K$ and $\varphi\in\sH(d)$ is mapped via
$\widehat{c}_{\alpha}$ into a shift of finite type $X\subseteq\{1,\ldots,n\}^{\mathbb{Z}^{d}}$
(i.e. $\widehat{c}_{\alpha}(\varphi)\subseteq X$). Then there is
a neighborhood of $\varphi$ in $\sH(d)$ whose members are mapped
via $\widehat{c}_{\alpha}$ to subsystems of $X$.\end{prop}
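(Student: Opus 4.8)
The plan is to translate the inclusion $\widehat{c}_{\alpha}(\varphi)\subseteq X$ into a finite list of statements asserting that certain clopen sets are empty, and then to exploit the \emph{rigidity} of clopen sets under small perturbations of the action. First I would normalize the description of $X$: since $X$ is an SFT it is defined by finitely many forbidden patterns, and after enlarging windows I may assume they are all supported on a single finite window $F\subseteq\mathbb{Z}^{d}$. For a pattern $p\colon F\to\{1,\ldots,n\}$ set $E_{p}(\psi)=\bigcap_{u\in F}\psi^{-u}A_{p(u)}$, a finite intersection of clopen sets and hence clopen. The elementary but central observation is that $p$ occurs in the $\alpha$-itinerary of some point of $K$ if and only if $E_{p}(\psi)\neq\emptyset$: indeed $p$ occurs at position $v$ in $c_{\alpha,\psi}(x)$ exactly when $\psi^{v}x\in E_{p}(\psi)$, and as $x$ ranges over $K$ the point $\psi^{v}x$ ranges over all of $K$. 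Consequently $\widehat{c}_{\alpha}(\psi)\subseteq X$ if and only if $E_{p}(\psi)=\emptyset$ for every forbidden pattern $p$; by hypothesis this holds for $\psi=\varphi$.

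The heart of the argument is that the clopen sets $\varphi^{-u}A_{i}$ do not merely vary continuously but are locally \emph{constant} in $\varphi$. Because each $A_{i}$ is clopen and $K$ compact, $\delta_{i}:=d(A_{i},K\setminus A_{i})>0$; put $\delta=\min_{i}\delta_{i}$. I claim that if $\sup_{x}d(\varphi^{u}x,\psi^{u}x)<\delta$ for every $u\in F$, then $\psi^{-u}A_{i}=\varphi^{-u}A_{i}$ for all $i$ and all $u\in F$. To see the inclusion $\varphi^{-u}A_{i}\subseteq\psi^{-u}A_{i}$, take $x$ with $\varphi^{u}x\in A_{i}$; then $\psi^{u}x$ lies within $\delta$ of the point $\varphi^{u}x\in A_{i}$, and since every point of $K\setminus A_{i}$ is at distance at least $\delta$ from $A_{i}$ this forces $\psi^{u}x\in A_{i}$, i.e. $x\in\psi^{-u}A_{i}$. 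The reverse inclusion follows by symmetry. Hence on the region where all these uniform estimates hold, $E_{p}(\psi)=E_{p}(\varphi)$ for every pattern $p$.

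It remains to produce a neighborhood on which the estimates hold. For each fixed $u\in\mathbb{Z}^{d}$ the assignment $\psi\mapsto\psi^{u}$ is a uniformly continuous map from $\sH(d)$ to $\sH$, since $\psi^{u}$ is a fixed word in the generators $\psi^{\pm e_{1}},\ldots,\psi^{\pm e_{d}}$ and composition of homeomorphisms of a compact metric space is continuous in the uniform metric; the metric on $\sH(d)$ was arranged to control both the generators and their inverses. As $F$ is finite, there is therefore a neighborhood $U$ of $\varphi$ in $\sH(d)$ such that $\sup_{x}d(\varphi^{u}x,\psi^{u}x)<\delta$ for all $\psi\in U$ and all $u\in F$. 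For $\psi\in U$ the previous paragraph gives $E_{p}(\psi)=E_{p}(\varphi)=\emptyset$ for every forbidden pattern $p$, and so $\widehat{c}_{\alpha}(\psi)\subseteq X$, as required.

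The one step that requires genuine care, rather than routine continuity, is the rigidity claim in the middle paragraph: the fact that clopen sets are \emph{preserved exactly}, not merely approximately, under sufficiently small perturbations is precisely the stability mechanism that underlies the propagation of SFT structure to nearby actions, and everything else is bookkeeping around it.
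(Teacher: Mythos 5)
Your proof is correct and follows essentially the same route as the paper's: both exploit the positive distance between each clopen atom and its complement to conclude that for $\psi$ sufficiently close to $\varphi$ the points $\varphi^{u}x$ and $\psi^{u}x$ lie in the same atom for all $u$ in the (finite) support window of the forbidden patterns, so that no forbidden pattern can occur in any $\alpha$-itinerary; your reformulation via exact equality of the sets $\psi^{-u}A_{i}=\varphi^{-u}A_{i}$ and emptiness of $E_{p}(\psi)$, together with the explicit use of the bijection $\psi^{v}$ to handle occurrences at arbitrary positions, merely makes precise details the paper leaves implicit. One small quibble: the map $\psi\mapsto\psi^{u}$ need not be \emph{uniformly} continuous on $\sH(d)$ (a modulus would depend on the moduli of continuity of the maps involved), but your argument only uses continuity at the single point $\varphi$, so nothing breaks.
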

\begin{proof}
Suppose $X\subseteq\{1,\ldots,n\}^{\mathbb{Z}^{d}}$ is specified
by disallowed patterns $b_{1},\ldots,b_{k}$ of diameter $<r$. Since
$A_{i}$ are clopen, it follows that whenever $\psi$ is an action
close enough to $\varphi$ then $\varphi^{u}(x),\psi^{u}(x)$ belong
to the same atom $A_{i}$ for every $u\in[-r,r]^{d}$ and $x\in K$.
Thus $c_{\alpha,\psi}(x)$ does not contain any of the $b_{i}$ and
so $c_{\alpha}(\psi)\subseteq X$.\end{proof}
\begin{prop}
\label{pro:density-of-SFTs}The shifts of finite type are dense in
$\sH(d)$.\end{prop}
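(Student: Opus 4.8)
The plan is to approximate $\varphi$ by an action that is visibly conjugate to a shift of finite type, using a fine clopen partition to pin down $\varphi$ at resolution $\epsilon$, an SFT that approximates the resulting symbolic cover, and a product with a full shift to create enough room to carry the SFT back onto $K$. So fix $\varphi\in\sH(d)$ and $\epsilon>0$ and choose a clopen partition $\alpha=\{A_{1},\ldots,A_{n}\}$ with $\operatorname{diam}A_{j}<\epsilon$ for every $j$, so that two points lying in a common atom are $\epsilon$-close. Let $Y=\widehat{c}_{\alpha}(\varphi)\subseteq\{1,\ldots,n\}^{\mathbb{Z}^{d}}$ be the symbolic cover, and for $r\geq 1$ let $Y_{r}$ be the SFT obtained by forbidding exactly those patterns on $[-r,r]^{d}$ that do not occur in $Y$. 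Then $Y\subseteq Y_{r}$, and the one combinatorial fact I would record is that for the finite window $W=\{0,\pm e_{1},\ldots,\pm e_{d}\}\subseteq[-r,r]^{d}$ the sets of $W$-patterns of $Y_{r}$ and of $Y$ coincide: any $z\in Y_{r}$ has $z|_{[-r,r]^{d}}$ occurring in $Y$, hence $z|_{W}$ occurs in $Y$.

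Next I would build the target SFT and realize it on $K$. Let $\Omega=\{0,1\}^{\mathbb{Z}^{d}}$ be the full shift and set $Z_{r}=Y_{r}\times\Omega$, a shift of finite type that is infinite and perfect. For each $W$-pattern $p$ write $K_{p}=\{x:c_{\alpha,\varphi}(x)|_{W}=p\}=\bigcap_{u\in W}\varphi^{-u}A_{p_{u}}$, a clopen subset of $K$, and $(Z_{r})_{p}=\{(y,\omega)\in Z_{r}:y|_{W}=p\}$. By the previous paragraph the two families are indexed by the same finitely many patterns $p$, with $K_{p}\neq\emptyset$ exactly when $(Z_{r})_{p}\neq\emptyset$, and every nonempty $(Z_{r})_{p}$ is a Cantor set because of the $\Omega$ factor. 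Since the nonempty $K_{p}$ are also Cantor sets, I can choose homeomorphisms $K_{p}\to(Z_{r})_{p}$ and glue them into a single homeomorphism $g:K\to Z_{r}$ whose $Y_{r}$-coordinates on $W$ reproduce $c_{\alpha,\varphi}(\cdot)|_{W}$. Setting $\psi=g^{-1}\sigma g$ then gives an action on $K$ conjugate to the SFT $Z_{r}$, so $\psi$ is an SFT in the sense of the paper.

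Finally I would verify closeness. Writing $g(x)=(g_{1}(x),g_{2}(x))$ with $g_{1}(x)|_{W}=c_{\alpha,\varphi}(x)|_{W}$, for a generator $e_{i}$ the point $x'=\psi^{e_{i}}x$ satisfies $g_{1}(x')=\sigma^{e_{i}}g_{1}(x)$, so its atom index is $g_{1}(x')_{0}=g_{1}(x)_{e_{i}}=c_{\alpha,\varphi}(x)_{e_{i}}$, which is exactly the atom index of $\varphi^{e_{i}}x$. Hence $\psi^{e_{i}}x$ and $\varphi^{e_{i}}x$ lie in a common atom of $\alpha$, and the same computation at $-e_{i}$ (which is why $W$ contains $\pm e_{i}$) controls the inverses; taking the atoms small enough this yields $d(\varphi,\psi)<\epsilon$.

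The main obstacle is not the approximation of $Y$ by $Y_{r}$ but this realization step. The naive attempt to conjugate $\varphi$ directly onto $Y_{r}$ breaks down when $\varphi$ is non-expansive — for instance the identity action, whose symbolic cover $Y$ is finite — because then the fibers one must split and the extra configurations one must fill simply do not match the local structure of the Cantor set $K$. The point of the full-shift factor $\Omega$ is precisely that it makes every central cylinder $(Z_{r})_{p}$ of the target a Cantor set, which is exactly what guarantees the matching homeomorphism $g$ exists no matter how degenerate $\varphi$ is; this is the step I expect to require the most care to state cleanly.
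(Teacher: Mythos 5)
Your proposal is correct and takes essentially the same route as the paper: replace the symbolic cover $\widehat{c}_{\alpha}(\varphi)$ by an SFT hull (the paper uses nearest-neighbor transition constraints, you use the allowed $[-r,r]^{d}$-patterns), take the product with a full shift so the result has no isolated points, and pull the SFT back to $K$ via a homeomorphism matching clopen cylinders to the corresponding clopen subsets of $K$. Your one refinement is matching cylinders over the window $W=\{0,\pm e_{1},\ldots,\pm e_{d}\}$ rather than just the symbol at the origin, which makes the estimate $d(\varphi,\psi)<\epsilon$ immediate (both $\psi^{\pm e_{i}}x$ and $\varphi^{\pm e_{i}}x$ land in the same atom of $\alpha$), whereas the paper's coarser single-symbol matching leaves this verification implicit and in fact requires a small additional uniform-continuity refinement of the partition.
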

\begin{proof}
Let $\varphi\in\sH$, $\varepsilon>0$, and choose a clopen partition
$\alpha=\{A_{1},\ldots,A_{n}\}$ of $K$ whose atoms are of diameter
$<\varepsilon$. Let $X\subseteq\{1,\ldots,n\}^{\mathbb{Z}^{d}}$
be the SFT specified by the condition that if for some $1\leq i,j\leq n$
and $1\leq k\leq d$ there is no $y\in K$ such that $y\in A_{i}$
and $\varphi^{e_{k}}y\in A_{j}$, then whenever $x\in X$ and $x(u)=i$
then $x(u+e_{k})\neq j$. Let $X'=X\times Y$ where $Y$ is the full
shift (this is only to ensure that $X'$ has no isolated points).
Choose a homeomorphism $\pi:X\times Y\rightarrow K$ mapping $[i]\times Y$
onto $A_{i}$ (here $[i]$ is the cylinder set $[i]\subseteq\{1,\ldots,n\}^{\mathbb{Z}^{d}}$),
and let $\psi=\pi\sigma\pi^{-1}$. One verifies that $d(\varphi,\psi)<\varepsilon$
and clearly $\psi$ is conjugate to the SFT $X'$.
\end{proof}

\subsection{\label{sub:Effectively-closed-sets}Effective dynamics}

There are a number of ways that one can define what it means for a
dynamical system is computable. We shall adopt a rather weak one;
at the end of this section we briefly discuss its relation to other
notions of computability.

A sequence $(a_{n})$ of integers is \emph{recursive} (R) if there
is an algorithm $A$ (formally a Turing machine) that, upon input
$n\in\mathbb{N}$, outputs $a_{n}$. A set of integers is \emph{recursively
enumerable} (RE) if it is the set of elements of some recursive sequence.
By identifying the integers with other sets we can speak of recursive
sequences of other elements. For example, since $\mathbb{N}\cong\mathbb{N}^{2}$
(and the bijection can be made effective), we can speak of recursive
sequences of pairs of integers; and in the same way of sequences of
finite sequences of integers. 

We shall assume from here on that the Cantor set is parametrized in
an explicit way. We shall use several such parametrization, representing
$K$ as $\{0,1\}^{\mathbb{N}}$, $\{1,2,\ldots,k\}^{\mathbb{Z}^{d}}$
and $(\{0,1\}^{\mathbb{N}})^{\mathbb{Z}^{d}}=K^{\mathbb{Z}^{d}}$.
All three may be identified by explicit homeomorphisms in such a way
that a family of cylinder sets in one is R or RE if and only if the
corresponding family of cylinder sets in the other parametrization
are also R or RE, respectively.

A subset $X\subseteq K$ is \emph{effective }if its complement is
the union of a recursive sequence of cylinder sets. Effective sets
are automatically closed and have been extensively studied in the
recursion theory literature, see e.g. \cite{Rogers67}.
\begin{defn}
A closed, shift-invariant subset $X\subseteq\{1,2,\ldots,k\}^{\mathbb{Z}^{d}}$
or $X\subseteq K^{\mathbb{Z}^{d}}$ is effective if it is an effectively
closed set.
\end{defn}
Note that this is not an isomorphism invariant. This is clear from
cardinality considerations, since there are countably many effective
subsets (each is defined by some algorithm), but there are uncountable
many ways to embed the full shift $\{0,1\}^{\mathbb{Z}^{d}}$, which
is effective, in $K^{\mathbb{Z}^{d}}$. However, effectiveness is
preserved under symbolic factors and is thus an invariant for symbolic
systems:
\begin{prop}
\label{pro:symbolic-factors-of-effective-systems}A symbolic factor
of an effective system is effective.
\end{prop}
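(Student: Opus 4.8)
The plan is to prove directly that the image $Y=\pi(X)$ of the given factor map $\pi\colon X\to Y$ is effectively closed. Since $X$ is compact and $\pi$ continuous and shift-commuting, $Y$ is compact (hence closed) and shift-invariant, so the only thing to produce is a recursive enumeration of cylinder sets whose union is the complement of $Y$. The first step is to record that $\pi$ is a sliding block code (Curtis--Hedlund--Lyndon; for a symbolic target this is just continuity of $x\mapsto\pi(x)_{0}$ into a finite discrete alphabet together with compactness of $X$): there is a finite window $W\subseteq\mathbb{Z}^{d}$ and a local rule $f$ with $\pi(x)_{u}=f\big((x_{u+w})_{w\in W}\big)$. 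This finite data is a fixed object that the enumerating algorithm may be handed at the outset.

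The key observation is a combinatorial criterion for a cylinder to avoid $Y$. For a finite pattern $b$ on a finite domain $D$, the restriction $\pi(x)|_{D}$ depends only on $x|_{D+W}$, so $[b]\cap Y\neq\emptyset$ holds if and only if some pattern $a$ on $D+W$ is both carried to $b$ by the local rule and satisfies $[a]\cap X\neq\emptyset$. The set $P_{b}$ of patterns on $D+W$ carried to $b$ is finite and computable from $b$, $W$ and $f$, so $[b]\cap Y=\emptyset$ exactly when $[a]\cap X=\emptyset$ for \emph{every} $a\in P_{b}$. As $Y$ is closed, its complement is precisely the union of those cylinders $[b]$ with $[b]\cap Y=\emptyset$, so it suffices to enumerate these $b$.

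The crux is then to semi-decide, for a single finite pattern $a$, whether $[a]\cap X=\emptyset$, and this is where effectiveness of $X$ is used. By hypothesis the complement of $X$ equals $\bigcup_{n}[c_{n}]$ for a recursive sequence of cylinders $(c_{n})$, and $[a]\cap X=\emptyset$ is equivalent to the inclusion $[a]\subseteq\bigcup_{n}[c_{n}]$. Since $[a]$ is compact and each $[c_{n}]$ open, this holds if and only if $[a]\subseteq\bigcup_{n\leq N}[c_{n}]$ for some finite $N$; and for fixed $N$ the latter is a decidable statement about the finitely many symbol patterns on a box large enough to contain the domains of $a$ and of $c_{1},\ldots,c_{N}$. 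Hence $[a]\cap X=\emptyset$ is recursively enumerable: search over $N$ and halt once a finite cover is found.

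Assembling these ingredients, I would dovetail over all finite patterns $b$, running in parallel the semi-decision procedures for $[a]\cap X=\emptyset$ across the finitely many $a\in P_{b}$, and emit the cylinder $[b]$ as soon as all of them halt. This yields a recursive enumeration of cylinders whose union is the complement of $Y$, establishing that $Y$ is effective. I expect the compactness reduction in the third step to be the only genuinely delicate point; the remaining wrinkle is the bookkeeping when $X\subseteq K^{\mathbb{Z}^{d}}$ rather than over a finite alphabet, where one replaces symbols by atoms of a fixed clopen partition and appeals to the effective equivalence of the parametrizations set up earlier, but the structure of the argument is unchanged.
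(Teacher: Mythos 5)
Your proof is correct: the Curtis--Hedlund--Lyndon reduction to a local rule, the compactness argument making emptiness of $[a]\cap X$ semi-decidable from the recursive enumeration of the complement of $X$, and the dovetailed enumeration of cylinders missing $Y$ together establish effectiveness of the factor, and your closing remark correctly handles the $K^{\mathbb{Z}^{d}}$ case via clopen partitions. The paper gives no inline proof but defers to \cite[Proposition 3.3]{H07}, whose argument is essentially this same one, so no comparison beyond that is needed.
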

For a proof see \cite[Proposition 3.3]{H07}. It is crucial that by
a symbolic factor we mean a factor that is a subsystem of $\{1,\ldots,k\}^{\mathbb{Z}^{d}}$
for some $k$. Although a subsystem $Y\subseteq\{x_{1},\ldots,x_{k})^{\mathbb{Z}^{d}}$
for points $x_{i}\in K$ is isomorphic to a symbolic system, such
a $Y$ may or may not be effective as a subsystem of $K^{\mathbb{Z}^{d}}$,
depending on the points $x_{i}$ (to see this it is enough to consider
fixed points of the shift action on $K^{\mathbb{Z}^{d}}$).

Since we are working in the space of actions, rather than the space
of subshifts, we introduce the following:
\begin{defn}
\label{def:effective-action}Let $\varphi\in\sH(d)$. A finite sequence
$\{(C_{i},u_{i})\}_{i=1}^{n}$, where $C_{i}$ is a cylinder set and
$u_{i}\in\mathbb{Z}^{d}$, is $\varphi$\emph{-disjoint }if\[
\bigcap_{u\in F}\varphi^{u}A_{u}=\emptyset\]
$\varphi$ is \emph{effective} if the set of $\varphi$-disjoint sequences
is RE, or in other words, if there is an algorithm that can recognize
a disjoint sequences in finite time (but may or may not identify non-disjoint
ones).
\end{defn}
This definition is related to effective subshifts as follows. Given
an action $\varphi\in\sH(d)$ and $x\in K$ let $\pi_{\varphi}(x)\in K^{\mathbb{Z}^{d}}$
be the point $(\pi_{\varphi}(x))_{u}=\varphi^{u}x$. Then $\pi_{\varphi}:K\rightarrow\pi(K)$
embeds $(K,\varphi)$ as the subsystem $(\pi_{\varphi}(K),\sigma)$
of $(K^{\mathbb{Z}},\sigma)$ (recall that $\sigma$ denotes the shift
action). One may verify that $\varphi$ is effective if and only if
$\pi_{\varphi}(X)$ is effective.%
\footnote{Note that if $Y\subseteq K^{\mathbb{Z}^{d}}$ is a closed and shift
invariant Cantor set which is effective, it can happen that $Y$ is
not of the form $Y=\pi_{\psi}(K)$ for any action $\psi$ on $K$.
For example, this is the case when $Y\subseteq\{x_{1},\ldots,x_{k}\}^{\mathbb{Z}^{d}}$
is a an infinite subshift for some fixed $x_{1},\ldots,x_{k}\in K$.%
}

Another way one might define effectiveness of an action $\varphi$
is to require that the maps $\varphi^{u}$ are computable. More precisely,
$\varphi$ is computable if there is an algorithm that, given $u\in\mathbb{Z}^{d}$,
an integer $n$ and a point $x\in K$, reads finitely many bits $x_{i}$
of $x$ and outputs $(\varphi^{u}x)_{n}$. This definition is similar
to that of Braverman and Cook \cite{BravermanCook06}, and implies
continuity of $\varphi^{u}$ and that the moduli of continuity are
computable. From this one can deduce that this notion is strictly
stronger than effectiveness in the sense of definition \ref{def:effective-action}.
Other definitions for effectiveness for sets, functions and dynamical
system have received some attention recently; see \cite{Grzegorczyk57,BrattkaPresser03,DelvenneKurkaBlondel06}.

\subsection{\label{sub:WTRP}Weak topological Rohlin Property}

Recall that a perfect space is one without isolated points. The non-effectiveness
part of theorem \ref{thm:WTRP} relies on the following:
\begin{thm}
\label{thm:no-universal-SFTs}No effective $\mathbb{Z}^{d}$-action
factors into every perfect SFT.
\end{thm}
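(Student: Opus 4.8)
The plan is to prove the contrapositive in a suitably effective form: given an effective action $\varphi$, I would produce a perfect SFT $X$ into which $\varphi$ does \emph{not} factor, and the natural candidate is a diagonal construction. The key observation is that factoring $\varphi$ into a symbolic SFT $X$ is witnessed by a clopen partition $\alpha$ with $\widehat{c}_\alpha(\varphi)\subseteq X$. Since $\varphi$ is effective, whether a given finite pattern is forbidden in $\widehat{c}_\alpha(\varphi)$ — i.e. whether the corresponding clopen sets have empty $\varphi$-intersection — is semi-decidable by Definition \ref{def:effective-action}. So for each fixed finite partition into cylinder sets, I can effectively enumerate the forbidden patterns of $\widehat{c}_\alpha(\varphi)$, and thus the subshift $\widehat{c}_\alpha(\varphi)$ is itself effective. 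This should let me reduce the whole question to the symbolic setting, where I can invoke Proposition \ref{pro:symbolic-factors-of-effective-systems}: any symbolic factor of $\varphi$ is effective.

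Here is where the diagonalization enters. There are only countably many effective subshifts (each is named by a Turing machine), so I would enumerate them as $X_1, X_2, \ldots$ — more precisely, I would enumerate the effective subshifts arising as symbolic factors of the fixed effective action $\varphi$, over all finite clopen (equivalently, cylinder) partitions $\alpha$. For $\varphi$ to factor into a perfect SFT $Y$, at least one $\widehat{c}_\alpha(\varphi)$ must be contained in $Y$, and that image is one of the effective subshifts $X_n$. The goal is then to build a single perfect SFT $Y$ that contains \emph{none} of the $X_n$ as a subshift; since every symbolic factor of $\varphi$ appears among the $X_n$, this $Y$ cannot receive a factor map from $\varphi$. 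To block $X_n$, I need $Y$ to forbid some pattern that appears in $X_n$, while keeping $Y$ a nonempty perfect SFT; so I would construct $Y$ stage by stage, at stage $n$ imposing a finite set of additional forbidden patterns that rules out $X_n$ without killing the existence of points or introducing isolated points (e.g. by always retaining a rich product factor, as in the $X\times Y$ trick of Proposition \ref{pro:density-of-SFTs}).

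The main obstacle will be reconciling two competing demands on $Y$: it must be an honest SFT (finitely many forbidden patterns), yet it must simultaneously exclude infinitely many effective subshifts $X_n$. A single SFT cannot literally forbid a pattern from every $X_n$ if those patterns accumulate, so the right formulation is almost certainly \emph{not} one fixed $Y$ but a family, or a more careful counting argument. I expect the actual argument to run the other way: assume for contradiction that $\varphi$ factors into every perfect SFT, and derive that $\varphi$ computes something it cannot — for instance, that one could semi-decide, uniformly in the description of an arbitrary perfect SFT $X$, whether $\varphi$ factors into $X$, or that the single object $\varphi$ would have to ``absorb'' uncountably many distinct symbolic behaviors while its symbolic factors form only a countable effective family. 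The crux is thus a cardinality-versus-effectiveness clash: the perfect SFTs (or their relevant invariants) are too numerous or too complex to all be targets of factor maps from one effective system, because each such factor map forces an effective symbolic factor of $\varphi$, and there are only countably many of those, whereas one can exhibit a perfect SFT whose minimal factors (or whose pattern complexity) cannot match any member of this countable list.

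I would therefore organize the proof as: (i) show effectiveness of $\varphi$ passes to every symbolic factor $\widehat{c}_\alpha(\varphi)$, so all symbolic factors of $\varphi$ lie in a fixed countable list $\{X_n\}$ of effective subshifts; (ii) exhibit a perfect SFT $Y$ engineered so that no $X_n$ factors onto it (equivalently, no $X_n$ embeds into $Y$ via the kind of factor map a partition would induce), using a diagonal or complexity argument against the countable list; and (iii) conclude that $\varphi$ cannot factor into $Y$, contradicting the hypothesis that it factors into every perfect SFT. The technical heart is step (ii), and I anticipate it relies on the rich supply of distinct perfect SFTs available in dimension $d\geq 2$ — precisely the multidimensional SFT phenomena the introduction advertises — to defeat any fixed countable family of effective candidate factors.
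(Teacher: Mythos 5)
Your step (i) is fine, and in fact your parenthetical guess in the middle paragraph --- that the real argument derives a contradiction by making some property of SFTs semi-decidable uniformly in their description --- is exactly the shape of the paper's proof. But your official plan (ii), the diagonal construction of a single perfect SFT $Y$ avoiding all $X_n$, cannot work, for the reason you yourself half-identify and then fail to escape: an SFT has only finitely many forbidden patterns, so a stage-by-stage construction that at stage $n$ adds patterns to block $X_n$ produces at best an \emph{effective} subshift, never an SFT. Worse, your fallback ``cardinality-versus-effectiveness clash'' is illusory: the SFTs themselves form a countable family (each is named by a finite list of patterns), so there is no counting argument available --- a priori nothing prevents a fixed countable list of effective subshifts from being containment-cofinal in all perfect SFTs, and ruling this out is precisely the content of the theorem, not something one gets for free. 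A telltale sign of the gap is that your sketch never uses $d\geq2$ in any essential way, whereas the statement is \emph{false} for $d=1$ (the Kechris--Rosendal generic action is effective and has dense conjugacy class, hence factors into every perfect SFT by density of SFTs), so any correct proof must invoke something special to higher dimensions.

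What the paper actually does (following \cite{H08b}) is reduce to Berger's theorem: emptiness of $\mathbb{Z}^d$-SFTs is undecidable for $d\geq2$. Suppose an effective $\varphi$ factored into every nonempty perfect SFT. Given the rules of an arbitrary SFT $X$, pass to $X'=X\times\{0,1\}^{\mathbb{Z}^d}$, which is empty if and only if $X$ is, and is perfect otherwise. Now run two searches in parallel: (a) the compactness-based semi-decision procedure for emptiness of $X'$ (if $X'$ is empty, some finite window admits no locally admissible pattern, and this is eventually found); (b) an enumeration of clopen (cylinder) partitions $\alpha$, semi-verifying for each that $\widehat{c}_\alpha(\varphi)\subseteq X'$, i.e.\ that each of the \emph{finitely many} defining patterns of $X'$ is $\varphi$-disjoint --- this is semi-decidable exactly because $\varphi$ is effective in the sense of Definition \ref{def:effective-action}, and every factor map into a subshift arises as some $c_{\alpha,\varphi}$. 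By hypothesis exactly one of (a), (b) halts, so emptiness of $X$ is decidable, a contradiction. Note that this proof never exhibits the offending perfect SFT $Y$; it is non-constructive in a way your plan (ii) tries, impossibly, to make explicit. To repair your write-up you should discard the diagonalization entirely and develop your middle paragraph into this reduction, naming the undecidable problem and the product trick that confines attention to empty-or-perfect instances.
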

The proof of this result is essentially identical to the proof given
in \cite{H08b}, where it was shown that if there were an effective
system that factors onto every SFT, then it could be used as part
of an algorithm that decides whether a given SFT is empty, and this
is undecidable by Berger's theorem \cite{B66,R71}. Two modifications
to the proof are needed to deduce the version above. First, an inspection
of the proof in \cite{H08b} shows that it does not use the fact that
the factor map is onto; thus the same proof works with the present
hypothesis that the map is into. Second, to prove the version above
we must show that, given the rules of an SFT which is either empty
or perfect, it is undecidable whether it is empty. But if we could
decide this, we could decide whether an arbitrary SFT were empty,
since an SFT $X$ is empty if and only if $X\times\{0,1\}^{\mathbb{Z}^{d}}$
is empty, and the latter is either empty or perfect. 
\begin{cor}
\label{pro:no-effective-transitive-action}If $\varphi\in\sH(d)$
has dense conjugacy class then it is not effective.\end{cor}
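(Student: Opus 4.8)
The plan is to derive Corollary \ref{pro:no-effective-transitive-action} from Theorem \ref{thm:no-universal-SFTs} by showing that any action with dense conjugacy class must factor into every perfect SFT, and then invoking the theorem to conclude that such an action cannot be effective. The key link is Proposition \ref{pro:stability-of-SFTs}, which provides the stability mechanism converting a single approximation into a factor map.

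First I would set up the factoring claim: I claim that if $\varphi \in \sH(d)$ has dense conjugacy class, then for every perfect SFT $X$, the action $\varphi$ factors into $X$. To see this, fix a perfect SFT $X \subseteq \{1,\ldots,n\}^{\mathbb{Z}^d}$. Since $X$ is perfect it has no isolated points, and by the construction in Proposition \ref{pro:density-of-SFTs} (or directly) $X$ is realized, up to conjugacy, by some action $\psi \in \sH(d)$ on $K$ together with a clopen partition $\alpha$ for which $\widehat{c}_\alpha(\psi) = X$. Because $[\varphi]$ is dense, there are conjugates of $\varphi$ arbitrarily close to $\psi$; pick one, $\varphi' = \pi\varphi\pi^{-1}$, close enough that Proposition \ref{pro:stability-of-SFTs} applies, giving $\widehat{c}_\alpha(\varphi') \subseteq X$. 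Then $c_{\alpha,\varphi'} : (K,\varphi') \to X$ is a factor map into $X$. Since $\varphi'$ is conjugate to $\varphi$, composing with the conjugacy $\pi$ shows $\varphi$ itself factors into $X$.

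The main obstacle, and the step requiring the most care, is ensuring that the target SFT $X$ is genuinely realized by an action on $K$ with an appropriate clopen partition so that Proposition \ref{pro:stability-of-SFTs} can be invoked with $X$ as the finite-type constraint. This is precisely where perfectness is used: if $X$ had isolated points it would not be (conjugate to) an action on the Cantor set, and the approximation argument would break down; the product trick $X \times \{0,1\}^{\mathbb{Z}^d}$ seen in Proposition \ref{pro:density-of-SFTs} is the device that guarantees a perfect phase space, and the restriction to \emph{perfect} SFTs in Theorem \ref{thm:no-universal-SFTs} is exactly what makes this compatible.

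Finally I would close the argument by contradiction. Suppose $\varphi$ had dense conjugacy class and were effective. By the factoring claim just established, $\varphi$ factors into every perfect SFT. But this directly contradicts Theorem \ref{thm:no-universal-SFTs}, which asserts that no effective $\mathbb{Z}^d$-action factors into every perfect SFT. Hence an effective action cannot have dense conjugacy class, which is the statement of the corollary. I expect the proof to be short once the factoring claim is in hand; essentially all the work is in correctly combining density of the conjugacy class with the stability of finite-type conditions from Proposition \ref{pro:stability-of-SFTs}.
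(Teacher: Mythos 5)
Your proof is correct and follows essentially the same route as the paper: use density of the conjugacy class together with the stability of SFT coding (Proposition \ref{pro:stability-of-SFTs}) to conclude that $\varphi$ factors into every perfect SFT, then invoke Theorem \ref{thm:no-universal-SFTs}. You merely spell out the realization of a perfect SFT as an action on $K$ and the conjugation-invariance of factoring, details the paper's one-line proof leaves implicit.
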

\begin{proof}
By proposition \ref{pro:density-of-SFTs} $\varphi$ would factor
into every $\psi\in\sH(d)$ that is conjugate to an SFT; hence it
would factor into every perfect SFT, and the conclusion follows from
theorem \ref{thm:no-universal-SFTs}.
\end{proof}
To complete the proof of theorem \ref{thm:WTRP} it remains to establish
that there is a dense conjugacy class in $\sH(d)$. The argument is
similar to that given in \cite{AGW06} for the measure-preserving
category, and works for any countable group. Since $\mathcal{H}(d)$
is separable, we may choose a dense sequence $\varphi_{1},\varphi_{2},\ldots$
in it, and let $\psi=\times_{i}\varphi_{i}$ be the product action
on $K^{\aleph_{0}}$, i.e.. \[
\psi(x_{1},x_{2}\ldots)=(\varphi_{1}(x_{1}),\varphi_{2}(x_{2}),\ldots)\]
It suffices to show that the closure of $[\psi]$ contains all the
$\varphi_{i}$. We work with the parametrization $K=\{0,1\}^{\mathbb{N}}$.
Fix an integer $n\in\mathbb{N}$; by uniform continuity of $\varphi_{1}^{e_{1}},\ldots,\varphi_{i}^{e_{d}}$
there is a $k(n)$ so that each $j=1,\ldots,d$ the first $n$ coordinates
of $\varphi_{1}^{e_{j}}(x)$ depend only on the first $k(n)$ coordinates
of $x$ for. Choose a partition $I_{1},I_{2},\ldots$ of $\mathbb{N}$
into infinite sets with $\{1,\ldots,k(n)\}\subseteq I_{1}$. Let $\pi_{m}:\mathbb{N}\rightarrow I_{m}$
be order-preserving isomorphisms and let $\tau=\tau_{n}$ be the action
on $K^{\aleph_{0}}$ which acts on $K^{I_{m}}$ like $\pi_{m}\varphi_{m}\pi_{m}^{-1}$.
The action $\tau$ is conjugate to $\psi$, and the first $n$ coordinates
of $\tau^{e_{j}}(x)$ and $\varphi_{1}^{e_{j}}(x)$ agree for all
$x\in K$. This proves the claim and completes the proof of theorem
\ref{thm:WTRP}.

\section{\label{sec:STRP}The strong Rohlin property}

In this section we prove theorem \ref{thm:STRP}. The key fact that
we use is that if there were a generic system $\theta\in\mathcal{H}(2)$
, then it has countably many symbolic factors $X_{1},X_{2},\ldots$,
since every symbolic factor of $\theta$ arises from a clopen partition
of $K$ and there are only countable many of these. We shall construct
an SFT $Y$ and associated action $\varphi$ so that, in a neighborhood
$U\subseteq\sH(d)$ of $\varphi$, a generic $\psi\in U$ has a symbolic
factor distinct from the $X_{i}$'s, and hence is not conjugate to
$\theta$. This factor will be the projection of $\psi$ into $Y$.

The main property we want of $Y$ is that its subsystems can be easily
perturbed. This will be accomplished by making the space of subsystems
if $Y$ be very rich. More precisely, there will be a sofic factor
$Z$ of $Y$ whose subsystems are not isolated, and furthermore if
$X\subseteq Z$ is effective then $X$ also has no isolated subsystems.
This is what will allow us to perturb the projections of actions into
$Y$. The control over subsystems will be achieved using recursive
methods.

\subsection{\label{sub:Medvedev-degrees}Medvedev degrees and dynamics }

Given $X\subseteq\{0,1\}^{\mathbb{N}}$, a function $f:X\rightarrow\{0,1\}^{\mathbb{N}}$
is \emph{computable }if there is an algorithm $A$ such that, when
given as input a point $x\in X$ (technically, $x$ is an oracle for
the computation) and an integer $k$, outputs the first $k$ coordinates
of $f(x)$. Note that $x$ is an infinite sequence of $0$ and $1$'s,
but the algorithm will perform finitely many operations before halting
so it will only read a finite number of these bits. Which bits it
chooses to read will depend on the bits it has already read and on
$k$. Thus if $x'$ differs from $x$ on coordinates which were not
used then running the algorithm on $x',k$ will give the same result
as $x,k$. It follows easily that a computable function is continuous
in the induced topology. 

An effective subset $Y\subseteq\{0,1\}^{\mathbb{N}}$ is reducible
to an effective subset $X\subseteq\{0,1\}^{\mathbb{N}}$ if there
is a computable function $f:X\rightarrow Y$ (not necessarily onto).
We denote this relation by $X\succ Y$. One should interpret this
as follows: suppose we want to show that $Y$ is not empty by producing
in some manner a point $y\in Y$. If $X\succ Y$ and if we can produce
a point $x\in X$ then we can, by applying the computable function
$f$, obtain the point $y=f(x)$. Thus $X$ is at least as complicated
as $Y$, in the sense that demonstrating that $X\neq\emptyset$ is
at least as hard as demonstrating that $Y$ $\neq\emptyset$. Notice
that if $X\subseteq Y$ then $X\succ Y$ (the identity map is computable),
and that if $y\in Y$ is computable as a function $y:\mathbb{N}\rightarrow\{0,1\}$
(i.e. if there is an algorithm that given $k$ computes the $k$-th
coordinate of $y$) then $X\succ Y$ for all $X$, because there is
a computable function $X\rightarrow\{y\}$, i.e. the map that doesn't
use the input $x$ at all and simply computes the components of $y$. 

We say that $X,Y$ are Medvedev equivalent if $X\succ Y$ and $Y\succ X$.
The equivalence class of $X$ is denoted $m(X)$ and called is Medvedev
degree of $X$. By the above, there is a minimal Medvedev degree consisting
of all effective sets containing computable points. There is also
a maximal element. There are infinitely many Medvedev degrees, and
they form a distributive lattice. Overall, the structure of this lattice
is still rather mysterious, although the theory of Medvedev degrees
is classical in recursion theory; see \cite{Rogers67}. 

Medvedev degrees were introduced into the study of SFTs by S. Simpson
\cite{S07}, who observed that, since a factor map between SFTs is
given by a sliding block code, the factoring relation $X\rightarrow Y$
between SFTs implies $m(X)\succ m(Y)$. This is true more generally
for effective symbolic systems and leads to the question, which is
still far from understood, of the relation between the Medvedev degree
of an effective system and its dynamics. One such connection is the
following, which will be central to our argument: 
\begin{prop}
\label{pro:no-complex-minimal-actions}If an effective subshift is
minimal then it has minimal Medvedev degree.
\end{prop}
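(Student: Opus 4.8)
The plan is to show directly that a minimal effective subshift $X$ contains a computable point; by the discussion preceding the proposition this is exactly the assertion that $m(X)$ is the least Medvedev degree. I work in the symbolic picture, so $X\subseteq\Sigma^{\mathbb{Z}^{d}}$ with $\Sigma$ finite (a general symbolic minimal system reduces to this after recoding through a clopen partition). Write $L(X)$ for the language of $X$, i.e. the set of finite patterns occurring in some point of $X$, and for each $R$ let $W_R=[-R,R]^{d}$. The heart of the argument is that minimality forces $L(X)$ to be \emph{decidable}; granting this, a computable point is produced by a greedy box-by-box extension.

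First I record that $L(X)$ is co-recursively-enumerable, using only effectiveness. A pattern $q$ on a box $W_R$ fails to lie in $L(X)$ exactly when the clopen cylinder $[q]$ is disjoint from $X$, i.e. when $[q]$ is contained in the complement of $X$. Since $X$ is effective, its complement is the union of a recursive sequence of cylinders $C_{1},C_{2},\ldots$; by compactness $[q]\subseteq\bigcup_j C_j$ holds iff $[q]\subseteq C_{1}\cup\cdots\cup C_{t}$ for some $t$, and this containment is a finite clopen computation. Enumerating the $C_j$ and testing these containments therefore semi-decides ``$q\notin L(X)$''; equivalently, each forbidden box-pattern is eventually detected.

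The main step is that $L(X)$ is also recursively enumerable, and here minimality enters through uniform recurrence: since $X$ is minimal, every $p\in L(X)$ occurs \emph{syndetically}, so there is a radius $R$ such that the $W_R$-restriction of \emph{every} point of $X$ contains an occurrence of $p$. The semi-decision procedure for ``$p\in L(X)$'' dovetails over the radius $R$ and the enumeration stage $t$ above: at stage $(R,t)$ let $A_{R}^{t}$ be the set of patterns on $W_R$ not yet shown forbidden by time $t$, and halt (declaring $p\in L(X)$) as soon as $A_{R}^{t}$ is nonempty and every pattern in it contains an occurrence of $p$. For \emph{soundness}, observe $L(X)\!\restriction\! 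W_R\subseteq A_{R}^{t}$ and is nonempty (as $X\neq\emptyset$); picking any $q$ in it, the halting condition forces $q$ to contain an occurrence of $p$, and since a subpattern of a globally admissible pattern is again in $L(X)$ by shift-invariance, $p\in L(X)$. For \emph{completeness}, if $p\in L(X)$ take $R$ from uniform recurrence; then every $q\in L(X)\!\restriction\! W_R$ contains $p$, so the finitely many $W_R$-patterns \emph{without} an occurrence of $p$ are all forbidden and, by the co-RE detection of the previous paragraph, are eventually crossed off, at which stage $A_{R}^{t}$ is nonempty and consists only of patterns containing $p$. Thus the search halts precisely when $p\in L(X)$, and $L(X)$ is recursive.

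Once $L(X)$ is decidable a computable point is immediate: choose $p_0\in L(X)$ on $W_0$, and inductively search the finitely many extensions of $p_n$ to $W_{n+1}$, testing membership in $L(X)$ until one is found (at least one exists since $p_n$ occurs in some point of $X$). This yields a recursive nested sequence of patterns whose union is a computable configuration $x$; since every finite pattern of $x$ lies in $L(X)$, each $x\!\restriction\! W_n$ extends to a point of $X$, and closedness of $X$ gives $x\in X$. Hence $X$ contains a computable point and has minimal Medvedev degree. I expect the delicate point to be the completeness half of the RE argument---specifically, combining uniform recurrence with compactness to guarantee that the dovetailed search halts after finitely many forbidden $W_R$-patterns have been enumerated, rather than only in the limit.
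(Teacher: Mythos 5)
Your proof is correct and follows essentially the same route as the paper, which gives no inline argument but defers to \cite[Proposition 9.4]{H07}: there the statement is proved for SFTs by exactly your mechanism---the language is co-RE by effectiveness plus compactness, RE by uniform recurrence of minimal systems, hence decidable, after which a computable point is built by greedy extension. The only blemish is your opening parenthetical: recoding a subshift of $K^{\mathbb{Z}^{d}}$ through a clopen partition yields a symbolic \emph{factor}, not a conjugate copy, so it does not literally reduce the general case to the finite-alphabet one; but the finite-alphabet case is all the paper uses (the proposition is applied to symbolic factors), and in any event your dovetailing argument runs verbatim for patterns specifying finitely many bits at finitely many sites of $W_{R}$.
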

The proof follows from \cite{H07}, proposition 9.4, where it was
shown for SFTs. 

We can now prove theorem \ref{thm:minimal-systems}. Suppose $Y$
is an SFT with non-minimal Medvedev degree, and let $Y_{0}\subseteq Y$
be a minimal subsystem. Let $\varphi$ be an action conjugate to $(Y_{0},\sigma)$.
Thus there is an open neighborhood $U\subseteq\sH(d)$ of $\psi$
so that every $\psi\in U$ factors into $Y$. 

We claim that $U$ does not contain any minimal SFTs. Indeed, if $\psi\in U$
were an action conjugate to a minimal SFT then every symbolic factor
of $(K,\psi)$ is effective and has minimal Medvedev degree. But this
would imply that $Y$ contains an effective subshift with minimal
degree and so itself has minimal degree, contrary to assumption.

The same argument shows that the effective systems are nowhere dense
in the space of transitive actions.

\subsection{\label{sub:Construction-of-Y}Construction of the SFT $Y$}

We construct an SFT $Y$ factoring onto a sofic shift $Y\rightarrow Z$,
so that $Z$ is the union of its minimal subsystems and has nontrivial
Medvedev degree.

Let $\Omega\subseteq\{0,1\}^{N}$ be an effective, closed set of non-trivial
Medvedev degree. To each $\omega\in\Omega$ we assign the point $y_{\omega}\in\{0,1\}^{\mathbb{Z}}$
defined as follows. First choose an effective enumeration of the integers:
$n(1),n(2),\ldots$. Select the coordinates of $y_{\omega}$ that
form the arithmetic progression of period $2$ passing through $n(1)$,
and assign to them the symbol $\omega(1)$. Next, choose the arithmetic
progression of period $4$ passing through the first of the $n(i)$
that is not yet colored, and assign to these coordinates the symbol
$\omega(2)$. At the $k$-th step, color with the symbol $\omega(k)$
the coordinates belonging to the arithmetic progression that passes
through the first uncolored $n(i)$. Let $Z_{\omega}$ denote the
orbit closure of $z_{\omega}$ and set $Z'=\cup_{\omega\in\Omega}Z_{\omega}$.
The map $\omega\mapsto z_{\omega}$ is a computable function $\Omega\rightarrow Z'$,
and there is also a recursive function $Z'\to\Omega$. One may verify
$Z'$ is closed and is effective. Thus $Z',\Omega$ are Medvedev equivalent.

To obtain an SFT $Y$ from $Z'$, we rely on the construction in section
6 of \cite{HM07}:
\begin{thm}
\label{thm:subactions-of-SFTs} There is a $\mathbb{Z}^{2}$ sofic
shift $Z$ such that $(Z,\sigma^{e_{1}})\cong Z'$ and $\sigma^{e_{2}}$
acts as the identity on $Z$. 
\end{thm}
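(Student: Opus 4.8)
The plan is to realize $Z'$ by a \emph{simulation}: I would construct a $\mathbb{Z}^{2}$ SFT $W$ that factors onto the desired $Z$, where each configuration of $W$ carries a ``data layer'' together with auxiliary computation layers, and then obtain $Z$ by projecting onto the data layer. I would force the data layer to be constant in the $e_{2}$-direction, so that its admissible configurations are determined by a single biinfinite horizontal word $x\in\{0,1\}^{\mathbb{Z}}$, and arrange the local rules so that such an $x$ completes to a point of $W$ precisely when $x\in Z'$. Since $Z'$ is effective, its complement is a recursively enumerable union of cylinders, so there is a Turing machine $M$ enumerating the forbidden words of $Z'$; a word $x$ fails to lie in $Z'$ exactly when $M$, reading some finite window of $x$, eventually detects a forbidden pattern. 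The idea is to let $W$ encode, in the vertical direction, space--time diagrams of $M$ applied to windows of the data row, with the SFT rules enforcing a faithful simulation of $M$ and forbidding the state in which a forbidden pattern is detected.

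The device needed to make this work uniformly is a self-similar hierarchical structure in the plane---of the Robinson/Mozes type, as developed in section~6 of \cite{HM07}---which tiles $\mathbb{Z}^{2}$ by a nested family of square computation zones of unbounded size, placed so that every finite window of the data row is inspected by computations of all sufficiently large scales and at all horizontal positions. Inside each zone the vertical coordinate supplies the tape-history of a run of $M$ on the portion of $x$ visible to that zone, and the finite-range constraints check the transition rules of $M$, the consistency of the hierarchy, and the coherence of the data layer across zones. Because a detected forbidden word is the only obstruction, a complete computation layer exists over a given $x$ if and only if $M$ never detects a forbidden pattern at any scale or position, that is, if and only if $x\in Z'$. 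I would then take $Z$ to be the image of $W$ under the sliding-block factor map that remembers only the data layer. Then $Z$ is sofic by construction; since the data layer is $e_{2}$-constant, $\sigma^{e_{2}}$ acts as the identity on $Z$; and the horizontal rows of points of $Z$ range exactly over $Z'$, giving $(Z,\sigma^{e_{1}})\cong Z'$.

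The hard part is the hierarchical self-similar tiling together with the embedding of the computation of $M$ into finite-range rules: one must guarantee simultaneously that \emph{every} finite window is checked infinitely often (so that no word outside $Z'$ survives) and that the auxiliary structure never obstructs the completion of a genuine point of $Z'$ (so that no point of $Z'$ is lost, and the factor is exactly $Z'$ rather than a proper sub- or supersystem). One must also keep the direction of the reduction straight: effectiveness yields only semi-decidability of the complement, which is why the construction is phrased in terms of forbidding a ``detected'' state rather than verifying membership directly. This is precisely the content of the construction cited from \cite{HM07}, which I would invoke after casting $Z'$ in the required form.
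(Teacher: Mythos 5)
Your proposal matches the paper's proof: the paper establishes this theorem simply by invoking the hierarchical simulation construction of section~6 of \cite{HM07}, and your sketch---a vertically constant data layer, Robinson/Mozes-type self-similar computation zones running a machine that enumerates the forbidden words of $Z'$, forbidding the detection state, and projecting onto the data layer to get a sofic $Z$---is an accurate description of exactly that construction. Since you, like the paper, ultimately delegate the hard combinatorial core (the self-similar tiling and the guarantee that every window is checked without obstructing genuine points of $Z'$) to \cite{HM07}, the two arguments coincide.
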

Let $Y$ be an SFT factoring onto the sofic shift $Z$ and let $\rho:Y\rightarrow Z$
be the factor map. It is easily verified that $Y$ has the required
properties. We mention that $m(Y)$ is non-trivial because $Y\succ Z$
and $m(Z)=M(Z')\neq0$.

\subsection{\label{sub:Subsystems-and-extensions-of-Y}Subsystems and extensions
of $Y$}

Recall that if $X$ is a metric space with metric $d$, then the Hausdorff
distance between compact subsets $A,B\subseteq X$ is defined by the
condition that $d(A,B)<\varepsilon$ if and only if for each $a\in A$
there is a $b\in B$ with $d(a,b)<\varepsilon$ and the same with
the roles of $A,B$ reversed. The topology induced by the Hausdorff
metric is independent of the metric we began with, is compact when
$X$ is, and is totally disconnected if $X$ is.

Note that if $(X,\varphi)$ is a dynamical system then the space of
subsystems is closed in the Hausdorff metric. The following is elementary:
\begin{lem}
Let $W\subseteq\{1,\ldots,k\}^{\mathbb{Z}^{d}}$. If $W$ is an SFT
then the subshifts of $W$ which are SFTs are dense among the subsystems
of $W$; and similarly if $W$ is effective then its effective subsystems
are dense.\end{lem}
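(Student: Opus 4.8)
The plan is to prove both halves of the statement by the same mechanism: approximating a given subsystem $V \subseteq W$ from above by SFTs (respectively effective subsystems) obtained by \emph{forbidding finitely many patterns beyond the rules of $W$}. Let me set up notation. Fix a subsystem $V \subseteq W$ and $\varepsilon > 0$; I want to produce an SFT (or effective) subshift $V' \subseteq W$ with Hausdorff distance $d(V, V') < \varepsilon$. Since the Hausdorff metric on a subspace of $\{1,\dots,k\}^{\mathbb{Z}^d}$ is controlled by agreement of patterns on a large finite window, it suffices to find $V'$ with the same language as $V$ up to some large radius $r = r(\varepsilon)$: concretely, if every pattern of $V'$ on $[-r,r]^d$ occurs in $V$ and vice versa, then $d(V,V') < \varepsilon$.

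First I would handle the SFT case. Let $L_r(V)$ denote the (finite) set of patterns on $[-r,r]^d$ appearing in points of $V$. Define $V'$ to be the subshift consisting of all $x \in W$ such that every $[-r,r]^d$-window of $x$ lies in $L_r(V)$. Since $W$ is an SFT and we are additionally forbidding a finite list of patterns (all $[-r,r]^d$-patterns not in $L_r(V)$), $V'$ is again defined by finitely many forbidden patterns, hence an SFT, and clearly $V' \subseteq W$. The two containments I must check are: (i) $V \subseteq V'$, which is immediate because every window of every point of $V$ is in $L_r(V)$ by definition; and (ii) every $[-r,r]^d$-pattern of $V'$ occurs in $V$ — this is exactly the defining condition of $V'$, so $L_r(V') \subseteq L_r(V)$, giving the reverse approximation. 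Together these force $d(V,V') < \varepsilon$ for $r$ chosen large relative to $\varepsilon$. This gives density of SFT subsystems in the space of subsystems of $W$.

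For the effective case I would run the identical construction but with $V$ an effective subsystem of an effective $W$. The only point needing care is that $V'$, defined as $\{x \in W : \text{every } [-r,r]^d\text{-window lies in } L_r(V)\}$, is again effective. Here $W$ is effective (given), and $L_r(V)$ is a \emph{finite} set, so forbidding its complement adds only finitely many cylinder constraints to the effective description of $W$; an effective set intersected with a clopen-defined set (a finite boolean combination of cylinders) remains effective. Hence $V'$ is effective and $V' \subseteq W$, and the same two pattern-containments as above yield $d(V,V') < \varepsilon$. Note that $r$ is just a fixed finite integer here, so no enumeration of $V$'s language across all radii is required — we only need $L_r(V)$ for the single chosen $r$.

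The step I expect to demand the most attention is the translation between Hausdorff closeness and finite-window language agreement, i.e. pinning down the dependence of $r$ on $\varepsilon$. This is genuinely elementary for subshifts of a fixed symbolic space, since the standard metric can be taken so that two configurations are within $2^{-r}$ precisely when they agree on $[-r,r]^d$, and a compactness argument shows that $L_r(V') = L_r(V)$ forces every point of one system to be Hausdorff-approximated by a point of the other to within $2^{-r}$. Because the lemma is explicitly flagged as elementary, I would state this window-versus-metric correspondence cleanly and not belabor the $\epsilon$--$r$ bookkeeping. The only conceptual content is the observation that ``match $V$'s patterns up to radius $r$, relative to the ambient rules of $W$'' produces an SFT in the SFT case and an effective set in the effective case — both following from the fact that we add only finitely many pattern constraints.
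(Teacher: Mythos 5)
Your proof is correct and takes essentially the same route as the paper: both approximate a given subsystem from above by adjoining finitely many forbidden patterns to the rules defining $W$ (the paper truncates an infinite forbidden-pattern list and uses Hausdorff convergence of the decreasing intersection, while you take the canonical truncation at radius $r$ and verify the $2^{-r}$ bound directly). One cosmetic slip in your effective case: the finitely many new forbidden patterns must be excluded at \emph{every} position of $\mathbb{Z}^{d}$, so the complement gains a recursive countable family of cylinders rather than a finite boolean combination --- but this still yields effectiveness, so nothing breaks.
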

\begin{proof}
We prove the SFT case, the effective case being similar. Suppose $W$
is defined by disallowed patterns $\overline{b}=b_{1},\ldots,b_{m}$.
Fix a subsystems $X\subseteq W$, which we must show is an accumulation
point of SFTs. $X$ is defined by an infinite sequence of disallowed
patterns $b_{1},\ldots,b_{m},b_{m+1},b_{m+2},\ldots$ extending the
sequence $\overline{b}$. Let $X_{n}$ be the SFT defined by excluding
the patterns $b_{1},\ldots,b_{n}$; for $n\geq m$ we have $X_{n}\subseteq W$
and $\cap X_{n}=X$. It follows that $d(X,X_{n})\rightarrow0$, as
desired.
\end{proof}
Let $\rho:Y\rightarrow Z$ be the factor and systems constructed in
the previous section. 
\begin{lem}
\label{lem:minimal-not-isolated}If $X\subseteq Z$ is effective,
then in the space of subsystems of $X$ no minimal subsystem is isolated
in the Hausdorff metric.\end{lem}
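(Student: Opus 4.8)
The plan is to reduce the statement to a fact about the hyperspace of the parameter set $\Omega$, exploiting the fibered structure of $Z'$ over $\Omega$. Recall from Section~\ref{sub:Construction-of-Y} that the recursive assignment $z\mapsto\omega$ defines a continuous, shift-invariant factor map $r:Z'\to\Omega$ (with $\Omega$ carrying the trivial action) whose fibers are exactly the minimal sets $Z_\omega=r^{-1}(\omega)$: each $Z_\omega$ is the orbit closure of the Toeplitz point $z_\omega$ and is minimal, and $\omega\mapsto Z_\omega$ is continuous and injective into the hyperspace of $Z'$. Via the isomorphism $(Z,\sigma^{e_1})\cong Z'$ of Theorem~\ref{thm:subactions-of-SFTs} (under which $\sigma^{e_2}$ acts trivially, so that $\mathbb{Z}^2$-subsystems of $Z$ are exactly the $\sigma^{e_1}$-subsystems), it suffices to prove the lemma for effective $X\subseteq Z'$.

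First I would identify the minimal subsystems and the subsystem lattice of $Z'$. Since each fiber $Z_\omega$ is minimal, any subsystem $W\subseteq Z'$ satisfies $W\cap Z_\omega\in\{\emptyset,Z_\omega\}$, whence $W=r^{-1}(r(W))=\bigcup_{\omega\in r(W)}Z_\omega$; in particular the minimal subsystems of $Z'$ are precisely the $Z_\omega$, and those contained in $X$ are the $Z_\omega$ with $\omega\in\Omega_X:=r(X)=\{\omega\in\Omega:z_\omega\in X\}$. The next step is to verify that $\Omega_X$ is \emph{effective}. Since $X$ is $\Pi^0_1$ and $\omega\mapsto z_\omega$ is computable, every finite window of $z_\omega$ is computable from finitely many bits of $\omega$, so the condition $z_\omega\notin X$ (some window of $z_\omega$ carries a forbidden pattern) is effectively open in $\omega$; hence $\Omega_X=\Omega\cap\{\omega:z_\omega\in X\}$ is an effective closed set.

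The heart of the argument is then recursion-theoretic. Because $\Omega$ has nontrivial Medvedev degree it contains no computable point, and therefore neither does $\Omega_X\subseteq\Omega$. On the other hand, an isolated point $\omega_0$ of an effective set is automatically computable: writing $\{\omega_0\}=\Omega_X\cap C$ for a cylinder $C$, one computes $\omega_0$ bit by bit by enumerating the cylinders in the effectively open complement of $\Omega_X$ until, past each known prefix, exactly one of the two one-step extensions is covered. Consequently $\Omega_X$ has no isolated points, i.e. it is perfect. Given a minimal $M=Z_{\omega_0}\subseteq X$, the point $\omega_0\in\Omega_X$ is thus a limit of distinct points $\omega_n\in\Omega_X$; by continuity and injectivity of $\omega\mapsto Z_\omega$ the systems $Z_{\omega_n}\subseteq X$ are distinct from $M$ and converge to $M$ in the Hausdorff metric, so $M$ is not isolated.

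I expect the main obstacle to be the effectiveness bookkeeping in the middle step: confirming that membership $z_\omega\in X$ is $\Pi^0_1$ \emph{uniformly} in $\omega$. This relies on the fixed Toeplitz structure underlying all the $z_\omega$ having a recursive window-to-scale function $N\mapsto K(N)$, so that the window $[-N,N]$ of $z_\omega$ depends only on the computably bounded initial segment $\omega(1),\dots,\omega(K(N))$. The remaining ingredients—minimality of the fibers, continuity of $\omega\mapsto Z_\omega$, and the two standard recursion-theoretic facts (nontrivial Medvedev degree $\Leftrightarrow$ no computable points, and isolated points of $\Pi^0_1$ sets are computable)—are routine once the construction of $Z'$ is in hand.
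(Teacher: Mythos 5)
Your strategy is genuinely different from the paper's. The paper's proof is a two-line reduction: an isolated minimal subsystem $X'\subseteq X$ would itself have to be effective, because the effective subsystems are dense in the hyperspace of subsystems (the lemma preceding this one), and then Proposition \ref{pro:no-complex-minimal-actions} forces $X'$, hence $X$, hence $Z$ to have minimal Medvedev degree, contradicting the construction. You instead avoid Proposition \ref{pro:no-complex-minimal-actions} entirely and work with the fibration $r:Z'\to\Omega$: the identification of subsystems of $Z'$ with closed subsets of $\Omega$, the uniform $\Pi^0_1$-ness of $\{\omega:z_\omega\in X\}$, and the standard facts that nontrivial Medvedev degree excludes computable points while isolated points of effective closed sets are computable, are all correct and correctly assembled. (One bookkeeping item you should record rather than absorb into the opening reduction: transferring effectiveness of $X\subseteq Z$ to its image in $Z'$ through the isomorphism of Theorem \ref{thm:subactions-of-SFTs} requires that isomorphism to be computable, or an appeal to Proposition \ref{pro:symbolic-factors-of-effective-systems} applied to the $\sigma^{e_1}$-subaction.)

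There is, however, a genuine gap at the one step you dismiss as routine: continuity of $\omega\mapsto Z_\omega$ in the Hausdorff metric. On all of $\{0,1\}^{\mathbb{N}}$ this map is \emph{not} continuous. Take $\omega_n=0^n 1 0^\infty\to\omega=0^\infty$: then $z_\omega$ is the zero sequence, so $Z_\omega$ is a single fixed point, while each $Z_{\omega_n}$ contains a point with the symbol $1$ at the origin (shift $z_{\omega_n}$ to a coordinate colored at step $n+1$), so $d(Z_{\omega_n},Z_\omega)$ stays bounded away from $0$. What rescues your argument is that such examples cannot occur inside $\Omega$: an eventually constant sequence is computable, and $\Omega$ has no computable points, so every $\omega\in\Omega$ carries both symbols infinitely often. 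Granting this, continuity on $\Omega$ can indeed be proved: after $K$ steps the uncolored coordinates form a single residue class mod $2^K$, so a window of radius $N\ll 2^K$ of any point of $Z_{\omega'}$, with $\omega'$ agreeing with $\omega$ on its first $K$ symbols, consists of the common skeleton plus at most one residual position carrying some symbol $s$; since $s=\omega(j)$ for some $j>K$ and the step-$j$ progression is a full residue class inside the residual class, a suitably aligned shift of $z_\omega$ matches the window exactly, and symmetrically with the roles of $\omega,\omega'$ exchanged (using that $\omega'\in\Omega$ is also not eventually constant). So your proof closes, but only by invoking the no-computable-points hypothesis a second time and by a nontrivial alignment argument specific to the Toeplitz construction; as written, the continuity assertion is false in the stated generality, and this step is where the real work of your approach lives.
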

\begin{proof}
Suppose $X'\subseteq X$ were an isolated minimal system. By the previous
lemma the effective subsystems of $X$ are dense, so $X'$ is an effective
minimal system and therefore has degree 0 by proposition \ref{pro:no-complex-minimal-actions},
implying the same for $X$ and therefore for $Z$, a contradiction.\end{proof}
\begin{lem}
\label{lem:minimals-perfect}If $X\subseteq Z$ is effective then
a minimal subsystem of $X$ is isolated if and only if its distance
from every other minimal subsystem of $X$ is bounded away from $0$. \end{lem}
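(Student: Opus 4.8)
I begin by noting that the statement is an equivalence, and that one direction is essentially free. If $M$ is isolated in the space of subsystems of $X$, then there is an $\varepsilon_{0}>0$ such that no subsystem $X'\neq M$ satisfies $d_{H}(X',M)<\varepsilon_{0}$, where $d_{H}$ denotes the Hausdorff metric; in particular every \emph{minimal} subsystem $M'\neq M$, being such a subsystem, has $d_{H}(M,M')\geq\varepsilon_{0}$, so its distance to $M$ is bounded away from $0$. The whole content therefore lies in the converse, and the plan is to assume $\delta:=\inf\{d_{H}(M,M'):M'\neq M\text{ a minimal subsystem of }X\}>0$ and deduce that $M$ is isolated among \emph{all} subsystems of $X$.

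The key structural input is that, since $X\subseteq Z$ and $Z$ is the union of its (pairwise disjoint) minimal subsystems $\{Z_{\omega}\}_{\omega\in\Omega}$ --- the Toeplitz-type systems of Section \ref{sub:Construction-of-Y} --- the subsystem $X$ is itself a union of minimal subsystems: each $x\in X$ lies in a unique $Z_{\omega}$, and minimality forces $Z_{\omega}=\overline{\{\sigma^{u}x\}}\subseteq X$. Thus the minimal subsystems of $X$ are exactly the $Z_{\omega}$ with $Z_{\omega}\subseteq X$, and writing $\Omega_{X}=\{\omega:Z_{\omega}\subseteq X\}$ I would identify the family $\mathcal{M}$ of minimal subsystems of $X$, sitting inside the hyperspace of subsystems, with $\Omega_{X}$ via the map $\omega\mapsto Z_{\omega}$. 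Since $\Omega_{X}$ is compact and this map is a homeomorphism onto $\mathcal{M}$, the hypothesis $\delta>0$ says exactly that $M=Z_{\omega_{0}}$ is an isolated point of $\mathcal{M}$, equivalently that $\omega_{0}$ is isolated in $\Omega_{X}$; consequently $\mathcal{M}\setminus\{M\}$, being parametrized by the closed set $\Omega_{X}\setminus\{\omega_{0}\}$, is compact.

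With this in hand I would conclude as follows. As distinct minimal systems are disjoint, the set $X'':=\bigcup(\mathcal{M}\setminus\{M\})=\bigcup_{\omega\in\Omega_{X}\setminus\{\omega_{0}\}}Z_{\omega}$ is disjoint from $M$; being the continuous image of the compact set $\{(\omega,y):\omega\in\Omega_{X}\setminus\{\omega_{0}\},\,y\in Z_{\omega}\}$ it is compact, hence $\eta:=\mathrm{dist}(M,X'')=\inf\{d(m,p):m\in M,\,p\in X''\}>0$. Now take any subsystem $X'\subseteq X$ with $X'\neq M$. Then $X'$ is again a union of minimal subsystems and is not contained in the minimal set $M$, so it contains a point $q\notin M$; the minimal system $Z_{\omega_{q}}\ni q$ is distinct from $M$, lies in $X'$, and lies in $X''$, so $X'$ meets $X''$. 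Choosing $p\in X'\cap X''$ gives $d(p,M)\geq\eta$ and hence $d_{H}(X',M)\geq\eta$. Thus the $\eta$-ball around $M$ contains no subsystem other than $M$, i.e. $M$ is isolated, which proves the converse.

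The step that demands genuine work, and where the particular construction of $Z$ is indispensable, is the compactness of $\mathcal{M}$ --- equivalently the closedness of the union $X''$ of the remaining minimal subsystems. Without it one could imagine minimal subsystems $M_{k}\neq M$ whose Hausdorff distances to $M$ stay $\geq\delta$ yet whose \emph{points} accumulate inside $M$, and then $\eta$ would collapse to $0$ and the argument would fail. The resolution is that all the $Z_{\omega}$ share a common combinatorial skeleton and vary continuously, with continuous inverse, in $\omega$, so that $\mathcal{M}$ is the continuous image of the compact $\Omega_{X}$; I expect the bulk of the verification to consist in establishing this continuity of $\omega\mapsto Z_{\omega}$ in the Hausdorff metric from the explicit Toeplitz description and from the recursive maps relating $\Omega$ and $Z'$ recorded in the construction of $Z$.
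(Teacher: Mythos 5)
Your reduction is sound in outline: the easy direction is as you say; every subsystem of $X\subseteq Z$ is a disjoint union of the minimal sets $Z_{\omega}$; and your endgame is valid as stated --- if $\mathcal{M}\setminus\{M\}$ is compact in the hyperspace then $X''=\bigcup(\mathcal{M}\setminus\{M\})$ is compact and disjoint from $M$, so $\eta=\mathrm{dist}(M,X'')>0$, and any subsystem $X'\neq M$ must contain a minimal piece other than $M$ and hence meet $X''$, giving $d_{H}(X',M)\geq\eta$. But the load-bearing claim, that $\omega\mapsto Z_{\omega}$ is a homeomorphism of a compact $\Omega_{X}$ onto $\mathcal{M}$ in the Hausdorff metric, is asserted and then deferred, and this is a genuine unproved step. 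Worse, the verification you sketch (direct continuity from the Toeplitz description by matching shifts of $z_{\omega}$ and $z_{\omega'}$) does not go through naively: if $\omega,\omega'$ agree on their first $k$ coordinates, the points $\sigma^{n}z_{\omega}$ and $\sigma^{n}z_{\omega'}$ still disagree on the leftover arithmetic progression of period $2^{k}$, which passes through or near the origin for suitable shifts $n$, so the pointwise matching of orbits is not uniformly good. The claim is nevertheless true, and the clean way to discharge it uses exactly the two computable maps recorded in the construction: since $\omega\mapsto z_{\omega}$ is computable (hence continuous), any Hausdorff limit $L$ of $Z_{\omega_{k}}$ with $\omega_{k}\to\omega$ contains $z_{\omega}$ and hence $Z_{\omega}$; and since the recursive map $g:Z'\to\Omega$ is continuous and collapses each $Z_{\omega_{k}}$ to the point $\omega_{k}$, also $L\subseteq g^{-1}(\omega)\cap Z'=Z_{\omega}$. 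Hyperspace compactness then yields $Z_{\omega_{k}}\to Z_{\omega}$, injectivity comes from $g$, and $\Omega_{X}=g(X)$ is compact as a continuous image of $X$. With this inserted, your proof is complete.

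You should know, though, that the paper's proof is far softer and shows your global compactness machinery is not needed. It proves the contrapositive locally: if subsystems $X_{1}\neq X_{0}$ come arbitrarily Hausdorff-close to the minimal set $X_{0}$, pick $x_{0}\in X_{0}$ and a finite $F$ with $\{\sigma^{u}x_{0}\}_{u\in F}$ $\varepsilon$-dense in $X_{0}$ (minimality), take $x_{1}\in X_{1}$ tracking $x_{0}$ over $F$; the orbit closure of $x_{1}$ is minimal, since every orbit closure in $Z$ is, and lies within $2\varepsilon$ of $X_{0}$. This uses only that $Z$ is a union of minimal systems --- no parametrization by $\Omega$, no effectiveness, no Toeplitz specifics. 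The same soft mechanism defuses the danger you flag at the end: nearby subsystems $X'\neq M$ can only arise from minimal pieces $M'\neq M$ contained in a small neighborhood of $M$ (one-sided closeness), and any Hausdorff-subsequential limit of such pieces is a nonempty closed invariant subset of $M$, hence equals $M$ by minimality, forcing $d_{H}(M',M)\to0$; so the scenario of minimal systems staying $\delta$-far while their points accumulate on $M$ never threatens the lemma, only your particular uniform-radius strategy. In exchange for the extra work, your route does buy something the paper's does not: a uniform isolation radius $\eta$ and the global picture of $\mathcal{M}$ as a copy of $\Omega_{X}$; but for the lemma as stated, the paper's local perturbation argument is the more economical proof.
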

\begin{proof}
Clearly if $X_{0}\subseteq X$ is isolated then its distance from
every other subsystem, and in particular the minimal ones, is bounded
away from $0$. 

Conversely, suppose there are systems arbitrarily close to $X_{0}$;
we must show that there are minimal systems arbitrarily close to $X_{0}$.
Let $\varepsilon>0$ and $x_{0}\in X_{0}$, and choose a finite $F\subseteq\mathbb{Z}^{d}$
so that $\{\sigma^{u}x_{0}\}_{u\in F}$ is $\varepsilon$-dense in
$X_{0}$. Let $X_{1}$ be a system $\varepsilon$-close to $X_{0}$,
and close enough that there is a point $x_{1}\in X_{1}$ such that
$d(\sigma^{u}x_{0},\sigma^{u}x_{1})<\varepsilon$ for $u\in F$. The
orbit closure $X_{1}'$ of $x_{1}$ is minimal since all subsystems
of $Z$ are. The proof will be completed by showing that $X'_{1}$
is within distance $2\varepsilon$ of $X_{0}$. To see this, note
that if $x\in X_{0}$ then $d(x,\sigma^{u}x_{0})<\varepsilon$ for
some $u\in F$, hence $d(x,T^{u}x_{1})<2\varepsilon$; and on the
other hand if $x'\in X'_{1}$ then $x'\in X_{1}$, so, since $d(X_{0},X_{1})<\varepsilon$,
there is a $x\in X$ with $d(x,x')<\varepsilon$. This implies $d(X_{0},X'_{0})<2\varepsilon$,
as required.\end{proof}
\begin{prop}
\label{pro:perurbation-of-subsystems}Let $Y$ by the SFT cover of
$Z$ constructed above, $W$ an SFT and $\pi:W\to Y$ a shift-commuting
map into a subsystem of $Y$. Then for every $\varepsilon>0$ there
is an SFT $W_{0}\subseteq W$ such that $d(W_{0},W)<\varepsilon$
in the Hausdorff metric and $\pi(W_{0})\neq\pi(W)$. \end{prop}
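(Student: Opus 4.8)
The plan is to push the problem down to the sofic factor $Z$, where the non-isolation lemmas apply, and then to produce $W_0$ by forbidding a single large pattern in the image. Write $\eta=\rho\circ\pi\colon W\to Z$, a shift-commuting map (a composition of block codes). If $\pi(W_0)=\pi(W)$ then $\eta(W_0)=\rho\pi(W_0)=\rho\pi(W)=\eta(W)$, so it suffices to build an SFT $W_0\subseteq W$ with $d(W_0,W)<\varepsilon$ and $\eta(W_0)\neq\eta(W)$. Put $X=\eta(W)\subseteq Z$. As a symbolic factor of the SFT $W$ it is effective by Proposition \ref{pro:symbolic-factors-of-effective-systems}, and since every orbit closure in $Z$ is minimal, $X$ is a union of minimal pieces $X=\bigcup_{\omega\in\Omega_X}Z_\omega$, where $\Omega_X\subseteq\Omega$ is the closed set of labels occurring in $X$ (closed because of the recursive map $Z'\to\Omega$). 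By Lemma \ref{lem:minimal-not-isolated} no minimal piece of $X$ is isolated, which under the homeomorphism $\omega\mapsto Z_\omega$ says exactly that $\Omega_X$ is perfect.

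Fix any $\omega_0\in\Omega_X$ and let $B_L=(z_{\omega_0})|_{[-L,L]^2}$ be the central block of the canonical generator of $Z_{\omega_0}$. Since $\eta$ is a block code of some radius $r$, the set $W_0=\{w\in W:\eta(w)\text{ contains no copy of }B_L\}$ is cut out of $W$ by adjoining the finitely many $W$-patterns of diameter $\le 2L+2r$ whose $\eta$-image contains $B_L$; hence $W_0$ is an SFT. Any preimage of $z_{\omega_0}$ has image containing $B_L$, so $z_{\omega_0}\notin\eta(W_0)$ while $z_{\omega_0}\in X$; thus $\eta(W_0)\subsetneq X$ and the inequality $\eta(W_0)\neq\eta(W)$ holds automatically. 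The entire content of the proposition is therefore the estimate $d(W_0,W)<\varepsilon$.

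Closeness amounts to approximating every $w\in W$ by a point of $W_0$ agreeing with it on a central window $[-N,N]^2$, where $N=N(\varepsilon)$. For $w$ whose image is far from the $Z_{\omega_0}$-orbit this is immediate, as then $w\in W_0$; the delicate case is $w$ lying over the origin pattern of $z_{\omega_0}$. Here I would take $L\gg N$ and invoke perfectness of $\Omega_X$: there are labels $\omega'\in\Omega_X$, $\omega'\neq\omega_0$, agreeing with $\omega_0$ on an arbitrarily long initial segment, so their generators $z_{\omega'}$ agree with $z_{\omega_0}$ on $[-N,N]^2$ yet deviate before completing $B_L$. Using the self-locating nature of the origin blocks in the construction of $Z$ from \cite{HM07} (a copy of $B_L$ pins down the positions of the nested colored progressions, so it can occur only centered), such a $z_{\omega'}$ contains \emph{no} copy of $B_L$, whence any $W$-preimage of $z_{\omega'}$ lies in $W_0$; matching it to $w$ on $[-N,N]^2$ is then a gluing inside the SFT $W$ carried out after enlarging the buffer.

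The main obstacle is exactly this last lifting/gluing step: a Hausdorff-small change of the image in $Z$ need not arise from a Hausdorff-small change upstairs, because $\eta$ collapses fibers. Two features are what rescue it, and both must be made precise: the locality (self-location) of the encoding blocks, which confines the effect of forbidding $B_L$ to preimages of a small ball of labels around $\omega_0$, and perfectness of $\Omega_X$ from Lemma \ref{lem:minimal-not-isolated}, which supplies, arbitrarily near each such label, genuinely different labels to reroute through. To keep the bookkeeping light I would in fact verify closeness only for a single, not-necessarily-SFT, perturbation $V\subseteq W$ with $Z_{\omega_0}\not\subseteq\eta(V)$ and $d(V,W)<\varepsilon/2$, and then upgrade: the set $\{V'\in\mathcal{S}(W):Z_{\omega_0}\not\subseteq\eta(V')\}$ is open, its complement being closed because $V'\mapsto\eta(V')$ is Hausdorff-continuous and containment of a fixed compactum is a closed condition. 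By density of SFT subsystems in $\mathcal{S}(W)$ there is then an SFT $W_0$ in this open set within $\varepsilon$ of $W$, and for it $\eta(W_0)\subsetneq X$, which completes the proof.
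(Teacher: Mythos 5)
Your overall strategy---pushing the problem down to $Z$ via $\eta=\rho\circ\pi$, using effectiveness of the image and non-isolation of its minimal pieces, and upgrading a crude perturbation to an SFT via density of SFT subsystems and an openness argument---runs parallel to the paper's proof, and the final upgrade step is sound in itself. But there is a genuine gap, which you have in fact located yourself: you never construct the perturbation $V$, i.e.\ you never prove $d(W_0,W)<\varepsilon$, and both devices you propose for this step fail. First, the ``gluing inside the SFT $W$'' is invalid: for $d\geq2$ an arbitrary SFT has no extension property, so there is no reason a point of $W$ agreeing with $w$ on $[-N,N]^2$ can be continued so that its $\eta$-image avoids $B_L$. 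Second, and more fundamentally, fixing $\omega_0\in\Omega_X$ \emph{in advance} cannot work. Since $Z_{\omega_0}$ is minimal, $B_L$ occurs syndetically in \emph{every} point of $Z_{\omega_0}$ (this also falsifies the ``self-locating, occurs only centered'' property you import from \cite{HM07}), so your $W_0$ is disjoint from $\eta^{-1}(Z_{\omega_0})$. But an entire $\varepsilon$-cylinder $C_i$ of $W$ may satisfy $\eta(C_i)\subseteq Z_{\omega_0}$---for instance $W$ could decompose as a disjoint union $W_1\sqcup W_2$ with $\eta(W_1)=Z_{\omega_0}$ and $W_1$ far from $W_2$; this is only ruled out when $\omega_0$ is non-computable (by proposition \ref{pro:no-complex-minimal-actions}), a choice your argument never makes. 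In that situation no subsystem whose image misses $Z_{\omega_0}$ is Hausdorff-close to $W$, so both your $W_0$ and the hypothesized $V$ may simply fail to exist.

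The paper resolves exactly this point with a pigeonhole argument instead of singling out one minimal piece: let $C_1,\ldots,C_n$ be a partition of $W$ into cylinders of diameter $<\varepsilon$; since the space of minimal subsystems of $X'=\rho\pi(W)$ has no isolated points (lemmas \ref{lem:minimal-not-isolated} and \ref{lem:minimals-perfect}) and is totally disconnected, $X'$ splits into $n+1$ nonempty clopen invariant pieces $X'_1,\ldots,X'_{n+1}$; each $C_i$ meets $(\rho\pi)^{-1}(X'_j)$ for some $j$, so after relabeling, $W'_0=\bigcup_{i=1}^n(\rho\pi)^{-1}(X'_i)$ meets every cylinder---making $\varepsilon$-closeness automatic---while its image omits the nonempty piece $X'_{n+1}$. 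The piece to discard is thus chosen \emph{after} seeing how the cylinders map into $Z$, which is precisely the freedom your fixed choice of $\omega_0$ (and of a single forbidden block $B_L$) gives up; the remaining approximation of $W'_0$ by an SFT subsystem coincides with your upgrade step. To repair your write-up you would have to replace the fixed $B_L$ by this adaptive choice, at which point you have reproduced the paper's argument.
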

\begin{proof}
Consider the diagram\[
\begin{array}{ccc}
 &  & W\\
 &  & \pi\downarrow\\
Y & \supseteq & X=\pi(W)\\
\rho\downarrow &  & \rho\downarrow\\
Z & \supseteq & X'=\rho(X)\end{array}\]
$X'$ is a sofic shift, so it is effective. Let $C_{1},\ldots,C_{n}$
be a partition of $W$ into cylinder sets of diameter $<\varepsilon$.
Since $Z$ is the disjoint union of its minimal subsystems so is $X'$.
Hence by lemma \ref{lem:minimals-perfect}, none of the minimal subsystems
of $X'$ is isolated, and since $X$ is totally disconnected so is
the space of minimal subsystems. We can therefore partition $X'$
into clopen, pairwise disjoint invariant subsystems $X'_{1},\ldots,X'_{n+1}$.
For each $C_{i}$ there is at least one $X'_{j}$ such that $C_{i}\cap(\pi\rho)^{-1}(X'_{j})\neq\emptyset$.
Thus without loss of generality, $(\pi\rho)^{-1}(X'_{i})\cap C_{i}\neq\emptyset$,
and so $W'_{0}=\cup_{i=1}^{n}\pi^{-1}(X_{i})$ satisfies the desired
properties except it is not an SFT. But the subsystems that are SFTs
are dense among the subsystems of $W$ by lemma ???; we may therefore
choose a system $W_{0}$ with the requisite properties.
\end{proof}
It remains to translate this approximation lemma to the space $\sH(d)$.
\begin{cor}
\label{cor:perturbations-of-subsystems}Let $Y,W$ and $\pi:W\rightarrow Y$
be as in the previous lemma, and let $\varphi\in\sH(d)$ be conjugate
to $W$ by coding with respect to a partition $\alpha=\{A_{1},\ldots,A_{n}\}$
of $K$. Then for every $\varepsilon>0$ there is a SFT $\psi$ with
$d(\varphi,\psi)<\varepsilon$, and $\psi$ factors via $c_{\alpha,\psi}$
to an SFT $W_{0}\subseteq W$ with $\pi(W_{0})\neq\pi(W)$.\end{cor}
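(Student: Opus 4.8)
The plan is to transfer Proposition \ref{pro:perurbation-of-subsystems}, which lives in the symbolic world of subshifts under the Hausdorff metric, into the world of actions on $K$ under the metric $d$ on $\sH(d)$. The hypothesis gives us an action $\varphi$ conjugate to the SFT $W$ via coding with respect to a clopen partition $\alpha=\{A_1,\ldots,A_n\}$; concretely, after identifying $K$ with (the closure of) $c_\alpha(\varphi)$, the map $\widehat c_\alpha$ sends $\varphi$ to $W$, and $\widehat c_\alpha(\varphi)=W$. First I would apply Proposition \ref{pro:perurbation-of-subsystems} to the given data $Y,W,\pi$ and the prescribed $\varepsilon$ (or a slightly smaller $\varepsilon'$ to be fixed later), obtaining an SFT $W_0\subseteq W$ with $d(W_0,W)<\varepsilon'$ in the Hausdorff metric and $\pi(W_0)\neq\pi(W)$.

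The core of the argument is then to realize $W_0$ as the symbolic factor of an action $\psi$ close to $\varphi$. The idea is to build $\psi$ as a coded copy of $W_0$ in exactly the same way $\varphi$ is a coded copy of $W$. Concretely, since $W_0\subseteq W\subseteq\{1,\ldots,n\}^{\mathbb Z^d}$ is itself an SFT, I would choose a homeomorphism $\pi_0\colon W_0\times Y_0\to K$ (with $Y_0$ a full shift adjoined, as in the proof of Proposition \ref{pro:density-of-SFTs}, to guarantee the domain is a perfect set and hence homeomorphic to $K$) that respects the partition $\alpha$, i.e.\ maps $[i]\times Y_0$ into $A_i$, and set $\psi=\pi_0\,\sigma\,\pi_0^{-1}$. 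This $\psi$ is conjugate to the SFT $W_0\times Y_0$, hence is an SFT, and by construction $c_{\alpha,\psi}(\psi)=W_0$. The Hausdorff closeness $d(W_0,W)<\varepsilon'$ should translate, via the uniform continuity of the coding and the fact that the atoms $A_i$ have controlled diameter, into the estimate $d(\varphi,\psi)<\varepsilon$ on the generating homeomorphisms; this is where I would spend the arithmetic, calibrating $\varepsilon'$ and the diameters of the $A_i$ against the target $\varepsilon$, using that on coordinates $u\in[-r,r]^d$ the itineraries of nearby configurations agree so the maps $\varphi^{e_k},\psi^{e_k}$ differ by at most the atom diameters.

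The main obstacle I anticipate is the bookkeeping of the two identifications: we have $\varphi$ identified with $W$ through one partition-respecting homeomorphism, and we want $\psi$ identified with $W_0$ through a compatible one, so that the \emph{same} partition $\alpha$ witnesses both codings and the distance between $\varphi$ and $\psi$ is genuinely controlled by the Hausdorff distance between $W$ and $W_0$ rather than by some uncontrolled re-embedding. The clean way to handle this is to perturb $\varphi$ only ``inside'' the symbolic data: keep $K$ and the partition $\alpha$ fixed, and modify the action so that its $\alpha$-itinerary set shrinks from $W$ to $W_0$. Because $W_0$ is obtained from $W$ by adjoining finitely many further forbidden patterns, the modification of $\varphi$ is supported on a clopen set determined by those patterns, and one can redefine $\varphi$ there so as to produce $\psi$ with itinerary set exactly $W_0$ while changing each $\varphi^{e_k}$ by less than $\varepsilon$. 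With $\psi$ so constructed, $c_{\alpha,\psi}$ factors $\psi$ onto $W_0$, and the already-established inequality $\pi(W_0)\neq\pi(W)$ from Proposition \ref{pro:perurbation-of-subsystems} delivers the final required property, completing the proof.
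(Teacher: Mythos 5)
Your first construction is pointed in the right direction, but as written it has a genuine gap at exactly the step you defer (``this is where I would spend the arithmetic''). Requiring the identification $\pi_{0}:W_{0}\times Y_{0}\rightarrow K$ to respect the partition $\alpha$ cannot yield $d(\varphi,\psi)<\varepsilon$: the atoms $A_{i}$ are \emph{given data} with fixed (possibly large) diameter, so their diameters cannot be ``calibrated against the target $\varepsilon$'' as you propose. The missing ingredient is the one the paper's proof turns on: since $c_{\alpha,\varphi}$ is a conjugacy, $\alpha$ is a generating partition, hence the refinement $\beta=\vee_{\left\Vert u\right\Vert <r}\varphi^{u}\alpha$ has atoms of diameter $<\eta$ for $r$ large, and the identification must respect $\beta$, not $\alpha$. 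This is also where the Hausdorff closeness of $W_{0}$ to $W$ actually enters: it guarantees that $W_{0}$ meets every cylinder of $W$ at scale $r$, equivalently that $K_{0}=c_{\alpha,\varphi}^{-1}(W_{0})$ meets every atom of $\beta$, which is what makes a $\beta$-respecting homeomorphism exist at all. The paper implements this by restricting $\varphi$ to the invariant set $K_{0}$, choosing $\rho:K\rightarrow K_{0}$ with $\rho(B)=K_{0}\cap B$ for each $B\in\beta$, and setting $\psi=\rho^{-1}(\varphi|_{K_{0}})\rho$; for $\rho$ to exist each $K_{0}\cap B$ must be a nonempty \emph{perfect} clopen set, and the paper gets perfectness from the nontrivial Medvedev degree of $W$ (an isolated point of the effective system $W_{0}$ would force minimal degree, via proposition \ref{pro:no-complex-minimal-actions}). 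Your $\times Y_{0}$ trick is a legitimate substitute for that perfectness argument, but only once the homeomorphism is required to match $\beta$-atoms with the corresponding $r$-cylinders of $W_{0}\times Y_{0}$; with only $\alpha$ matched, $d(\varphi,\psi)$ is uncontrolled.

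Your second, ``clean'' route is not a repair but a new gap: one cannot simply ``redefine $\varphi$'' on a clopen set determined by the newly forbidden patterns. You must produce $d$ commuting homeomorphisms of all of $K$, and ad hoc surgery on a clopen set guarantees neither the commutation relations nor that the resulting itinerary set is exactly $W_{0}$ (nor even that the modified maps are bijections). The paper's restriction-plus-global-conjugation is precisely the device that replaces such local surgery: conjugating the restricted action by a near-identity homeomorphism onto $K_{0}$ automatically preserves the $\mathbb{Z}^{d}$-action structure, and the condition $\rho(B)=K_{0}\cap B$ forces the $\beta$-itineraries of $\psi$ and $\varphi|_{K_{0}}$ to agree, so that $c_{\alpha,\psi}$ really does factor onto $W_{0}$. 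In short: keep your first construction, discard the second paragraph, and supply the generator/$\beta$ argument together with the nonemptiness-from-Hausdorff-closeness step; without these the proof does not close.
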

\begin{proof}
Since $\alpha$ generates for $\varphi$, there is an $r$ so that
the atoms of $\beta=\vee_{\left\Vert u\right\Vert <r}\varphi^{u}\alpha$
are of diameter $<\eta$ for a parameter $\eta$ we shall specify
later. By the previous lemma, we may choose a subshift $W_{0}\subseteq W$
so that $K_{0}=c_{\alpha,\varphi}^{-1}(W_{0})$ intersects each atom
of $\beta$. We define $\psi_{0}=\varphi|_{K_{0}}:K_{0}\rightarrow K_{0}$;
notice that $(K_{0},\psi_{0})\cong(W_{0},\sigma)$.

Since $W_{0}$ is an SFT it is effective, and since $W_{0}\subseteq W$
and $W$ has nontrivial degree, $W_{0}$ has no isolated points. Therefore
$K_{0}\cap B$ is topologically a Cantor set for each atom $B\in\beta$
and we may choose a homeomorphism $\rho:K\rightarrow K_{0}$ satisfying
$\rho(B)=K_{0}\cap B$ for $B\in\beta$. It follows that $d(x,\rho(x))<\eta$
for $x\in K$, so if $\eta$ is small enough, the action $\psi=\rho^{-1}\psi_{0}\rho$
will satisfy $d(\varphi,\psi)<\varepsilon$. Finally, the $\beta$-itineraries
of a point $x\in K$ are the same for the actions $\psi$ and $\psi_{0}$
since $\rho(B)=K_{0}\cap B$. Thus $c_{\beta,\psi}$ is a factor map
$(K,\psi)\rightarrow W_{0}$, and the lemma follows.
\end{proof}

\subsection{\label{sub:STRP}The strong topological Rohlin Property}

We now have all the parts we need to prove our main theorem.
\begin{thm}
\label{thm:main}For $d\geq2$ every isomorphism class in $\sH(d)$
is meager. \end{thm}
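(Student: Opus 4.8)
The plan is to argue by contradiction, showing that the existence of a residual (equivalently, non-meager, by the zero-one law for Polish group actions) conjugacy class leads to a violation of the perturbation results established in the previous subsections. Suppose for contradiction that some $\theta \in \sH(d)$ has a residual conjugacy class. Since $[\theta]$ is residual, it is in particular dense, so by the separability argument $\theta$ is a universal object in a strong sense; but more importantly, every symbolic factor of $\theta$ arises from coding with respect to some clopen partition of $K$, and there are only countably many such partitions. Hence $\theta$ has only countably many distinct symbolic factors, which I will enumerate as $X_1, X_2, \ldots$.

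The strategy is then to locate an open set $U \subseteq \sH(d)$ in which a generic action necessarily possesses a symbolic factor that cannot appear among the $X_i$, thereby preventing generic membership in $[\theta]$. Concretely, I would take the SFT $Y$ and its sofic factor $\rho\colon Y \to Z$ constructed in Section \ref{sub:Construction-of-Y}, fix an action $\varphi$ conjugate to $Y$ via coding with respect to a partition $\alpha$, and apply Proposition \ref{pro:stability-of-SFTs} to obtain a neighborhood $U$ of $\varphi$ in which every action maps via $\widehat{c}_\alpha$ into $Y$. The projection of any $\psi \in U$ into $Y$, composed with $\rho$, yields a subsystem of $Z$, and since $Z$ has nontrivial Medvedev degree, its space of subsystems is rich. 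The key mechanism is Corollary \ref{cor:perturbations-of-subsystems}: given any $\psi \in U$ whose factor into $Y$ is some $W$, we can perturb $\psi$ by an arbitrarily small amount to an action $\psi'$ whose projection $W_0 \subsetneq W$ satisfies $\pi(W_0) \neq \pi(W)$, thus changing the symbolic factor. This shows that within $U$, the set of actions realizing any one fixed symbolic factor (in particular any $X_i$) is nowhere dense.

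I would then assemble these pieces as follows. For each $i$, the set $G_i = \{\psi \in U : \psi \text{ has } X_i \text{ as a symbolic factor via this projection}\}$ is nowhere dense in $U$, by the perturbation corollary, which lets me move off any realization of $X_i$ while staying in $U$ (the perturbation changes the relevant projected factor, and since the $X_i$ are pairwise fixed targets, no small perturbation can be trapped in $G_i$). By the Baire category theorem, $U \setminus \bigcup_i G_i$ is residual in $U$. But any $\psi$ in this residual set factors into $Y$, hence carries a symbolic factor (its $Z$-projection, or more precisely the associated subsystem) that is distinct from every $X_i$, so $\psi \notin [\theta]$. This contradicts the assumption that $[\theta]$ is residual, since a residual set must meet the residual subset of $U$. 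Therefore no conjugacy class is residual, and by the standard dichotomy every conjugacy class is meager.

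The main obstacle I anticipate is the third step: making rigorous that each $G_i$ is genuinely nowhere dense rather than merely not comeager, and correctly identifying which \emph{invariant} of $\psi$ is being perturbed so that it provably disagrees with the fixed countable list $\{X_i\}$. Corollary \ref{cor:perturbations-of-subsystems} changes the projected subsystem $\pi(W_0) \neq \pi(W)$, but I must verify that this yields a genuinely \emph{new} symbolic factor of $\psi'$, and that by iterating or by a careful choice of the partition the resulting factor escapes the entire countable enumeration and not merely one particular $X_i$. Managing the bookkeeping between the partition $\alpha$ used to define $U$, the partition $\beta$ refining it in the corollary, and the symbolic factors of $\theta$ is where the delicacy lies; the Medvedev-degree machinery guarantees the requisite abundance of subsystems, but the reduction to a clean category argument requires care that the nowhere-density is uniform enough to survive the countable union.
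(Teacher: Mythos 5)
Your proposal is correct and follows essentially the same route as the paper's proof: fix a neighborhood $U$ of an action coding via $\widehat{c}_{\alpha}$ into the SFT $Y$ of nontrivial Medvedev degree, enumerate the countably many symbolic factors of $\theta$ (inside $Y$), use Corollary \ref{cor:perturbations-of-subsystems} to perturb SFT actions so that each set $\{\psi\in U:\widehat{c}_{\alpha}(\psi)=X_{i}\}$ is nowhere dense, intersect by Baire, and finish with the meager/comeager dichotomy for orbits of Polish group actions. The worry in your final paragraph is unfounded: the perturbed projection need only differ from the single fixed target $X_{i}$ under consideration (properness of the inclusion $\pi(W_{0})\subsetneq\pi(W)$ already gives this), since the Baire category theorem handles escaping the whole enumeration one $i$ at a time, exactly as your own assembly of the $G_{i}$ does.
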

\begin{proof}
Fix $\theta\in\sH(d)$ and let $\varphi\in sH(d)$ be conjugate to
the SFT $Y\subseteq\{1,\ldots,k\}^{\mathbb{Z}^{d}}$ constructed above,
via a partition $\alpha$. Using proposition \ref{pro:stability-of-SFTs}
choose a neighborhood $U$ of $\varphi$ such that $\widehat{c}_{\alpha}(\psi)\subseteq Y$
for $\psi\in U$. We shall show that there is a residual subset $V\subseteq U$
of systems which are not isomorphic to $\theta$. 

Let $Y_{1},Y_{2},\ldots$ be an enumeration of all the subsystems
of $Y$ that are factors of $\theta$. It suffices to show that for
every $i=1,2,3\ldots$ there is a dense open set $V_{i}\subseteq U$
consisting of actions $\psi$ with $\widehat{c}_{\alpha}(\psi)\neq Y_{i}$,
for then $V=\cap V_{i}$ is a dense $G_{\delta}$ and if $\psi\in V$
then $\widehat{c}_{\alpha}(\psi)\neq Y_{i}$ for all $i$, implying
that $\psi\not\cong\varphi$.

Fix $i$ and let $\psi\in U$ be an SFT. If $\widehat{c}_{\alpha}(\psi)\neq Y_{i}$
then clearly any action $\psi'$ sufficiently close to $\psi$ will
also have $c_{\alpha}(\psi')\neq Y_{i}$. Thus we must show that the
SFTs $\psi$ with this property are dense in $U$. We already know
that the SFTs are dense, so let $\psi\in U$ be an SFT and suppose
$\widehat{c}_{\alpha}(\psi)=Y_{i}$. Then for every $\varepsilon>0$
we can apply corollary \ref{cor:perturbations-of-subsystems} to get
an SFT action $\psi'$ withing $\varepsilon$ of $\psi$, so that
$\widehat{c}_{\alpha}(\psi')\subseteq\widehat{c}_{\alpha}(\psi)$
and $\widehat{c}_{\alpha}(\psi')\neq\widehat{c}_{\alpha}(\psi)$. 

To conclude the proof we use the general fact that any orbit of a
Polish group acting transitively on a Polish space is either meager
or co-meager \cite{GW08}. So far we have shown that $[\theta]$ is
not residual, because it is not residual in the open set $U$; so
$[\theta]$ is meager. 
\end{proof}

\section{\label{sec:Two-problems}Two problems}

The picture emerging from these results is that the space of $\mathbb{Z}^{d}$
actions on $K$ is mostly inaccessible to us. The closure of the space
of effective systems may be better behaved. Here are a couple of questions
about this space.

Recall that an action $\varphi\in\sH(d)$ is strongly irreducible
if there is an $R>0$ such that, for every pair of open sets $\emptyset\neq A,B\subseteq K$,
we have $\varphi^{u}A\cap B\neq\emptyset$ for every $u\in\mathbb{Z}^{d}$
with $\left\Vert u\right\Vert \geq R$. The class of SFTs with this
property has been widely studied in thermodynamics as the class with
the best hope of developing something of a thermodynamic formalism,
and in symbolic dynamics as a fairly manageable class where embedding
and factoring relations may be well behaved (note that the factor
of a strongly irreducible system is itself strongly irreducible). 
\begin{problem*}
Can every strongly irreducible action be approximated by a strongly
irreducible SFT?
\end{problem*}
In dimension 1 the answer is affirmative. Note that strongly irreducible
SFTs, like minimal SFTs, have Medvedev degree 0 \cite[Corollary 3.5]{HM07}.
Thus a negative answer would follow if we could construct an SFT of
non-trivial degree having some strongly irreducible subsystem (which
of course will not be effective). 

With regard to the space of minimal systems, we have shown that the
(relative) closure of the effective systems, and thus of the minimal
SFTs, has empty (relative) interior. It is still open if these closures
are the same. In other words,
\begin{problem*}
Can every minimal effective system be approximated by a minimal SFT?
\end{problem*}
\bibliographystyle{plain}
\bibliography{bib,recursion-theory}

\end{document}